\newtheorem{theo}{Theorem}
\newtheorem{lemm}{Lemma}
\newtheorem{coro}{Corollary}
\newtheorem{defi}{Definition}
\newtheorem{rema}{Remark}
\newtheorem{prob}{Problem}
\renewcommand{\le}{\leqslant}
\renewcommand{\ge}{\geqslant}
\newcommand{\vep}{\varepsilon}
\newcommand{\ds}{\displaystyle}
\newcommand{\bH}{\ensuremath{\mathbb H}}
\newcommand{\bN}{\ensuremath{\mathbb N}}
\newcommand{\bR}{\ensuremath{\mathbb R}}
\newcommand{\bZ}{\ensuremath{\mathbb Z}}
\newcommand{\sD}{\ensuremath{\mathsf D}}
\newcommand{\sL}{\ensuremath{\mathsf L}}
\newcommand{\cD}{\ensuremath{\mathcal D}}
\newcommand{\cH}{\ensuremath{\mathcal H}}
\newcommand{\cL}{\ensuremath{\mathcal L}}
\newcommand{\cdist}[1]{\mathsf{c}_{#1}}
\newcommand{\com}{\ensuremath{\mathsf{K}}}
\newcommand{\dX}{\ensuremath{\mathsf{d_X}}}
\newcommand{\dY}{\ensuremath{\mathsf{d_Y}}}
\newcommand{\sd}{\ensuremath{\mathsf{d}}}
\newcommand{\laak}{\mathsf{L_k}}
\newcommand{\diak}{\mathsf{D_k}}
\newcommand{\diam}{\ensuremath{\mathrm{diam}}}
\newcommand{\met}[1]{\mathsf{#1}}
\newcommand{\gra}[1]{\mathsf{#1}}
\newcommand{\banX}{\ban X}
\newcommand{\banY}{\ban Y}
\newcommand{\metX}{\met X}
\newcommand{\metY}{\met Y}
\newcommand{\banYn}{(\ban Y,\norm{\cdot})}
\newcommand{\metXd}{(\met X,\dX)}
\newcommand{\metYd}{(\met Y,\dY)}
\newcommand{\eqd}{\stackrel{\mathrm{def}}{=}}
\newcommand{\ban}[1]{\mathfrak{#1}}
\newcommand{\lrpar}[1]{\left( #1\right)}
\newcommand{\norm}[1]{\| #1\|}
\newcommand{\bnorm}[1]{\Big\| #1\Big\|}
\begin{document}
	
	\title[No dimension reduction for doubling spaces revisited]{No dimension reduction for doubling subsets of $\ell_q$ when $q>2$ revisited}
	
	\author{F.~ Baudier}
	\address{Texas A\&M University, College Station, TX 77843, USA}
	\email{florent@math.tamu.edu}
	
	\author{K.~ Swie\c cicki}
	\address{Texas A\&M University, College Station, TX 77843, USA}
	\email{ksas@math.tamu.edu}
	
	\author{A.~ Swift}
	\address{University of Oklahoma, Norman, OK 73019, USA}
	\email{ats0@ou.edu}
	\date{}
	
	\thanks{F. Baudier was partially supported by the National Science
		Foundation under Grant Number DMS-1800322.}
	
	\keywords{dimension reduction, doubling spaces, bi-Lipschitz embedddings, distortion, Laakso and diamond graphs, non-positive curvature, asymptotic midpoint uniform convexity}
	\subjclass[2010]{46B85, 68R12, 46B20, 51F30, 05C63, 46B99}

	\begin{abstract}
		We revisit the main results from \cites{BGN_SoCG14,BGN_SIAM15} and \cite{LafforgueNaor14_GD} about the impossibility of dimension reduction for doubling subsets of $\ell_q$ for $q>2$. We provide an alternative elementary proof of this impossibility result that combines the simplicity of the construction in \cites{BGN_SoCG14,BGN_SIAM15} with the generality of the approach in \cite{LafforgueNaor14_GD} (except for $L_1$ targets). One advantage of this different approach is that it can be naturally generalized to obtain embeddability obstructions into non-positively curved spaces or asymptotically uniformly convex Banach spaces.
	\end{abstract}

	\maketitle
	
	\setcounter{tocdepth}{3}
	\section{Introduction}
	The celebrated Johnson-Lindenstrauss \cite{JL84} lemma asserts that any $n$-point subset of $\ell_2^n$ admits a bi-Lipschitz embedding with distortion at most $1+\vep$ into $\ell_2^k$ where $k=O(\frac{\log n}{\vep^2})$. This dimension reduction phenomenon is a fundamental paradigm as it can be used to improve numerous algorithms in theoretical computer science (cf. \cite{Naor_ICM18}) both in terms of running time and storage space. Johnson and Lindenstrauss observed that a simple volume argument gives that the dimension must be at least $\Omega(\log \log n)$. Later Alon \cite{Alon03} showed that the bound in the Johnson-Lindenstrauss lemma was tight up to a $\log(1/\vep)$ factor. Recently, Larsen and Nelson \cite{LarsenNelson17} were able to show the optimality of the dimension bound in the Johnson-Lindenstrauss lemma. A common feature of the subsets exhibiting lower bounds on the dimension is that they have high doubling constants. In \cite{LangPlaut01}, Lang and Plaut raised the following fundamental question. 
	
	\begin{prob}\label{prob:reductiondoubling}
		Does a doubling subset of $\ell_2$ admit a bi-Lipschitz embedding into a constant dimensional Euclidean space?
	\end{prob}
	
	Based on a linear programming argument, Brinkman and Charikar \cite{BrinkmanCharikar05} proved that there is no dimension reduction in $\ell_1$. An enlightening geometric proof was given by Lee and Naor in \cite{LN04}. The subset of $\ell_1$ that does not admit dimension reduction is the diamond graph $\diak$ and has a high doubling constant. However, there does exist a doubling subset\footnote{The results are asymptotic in nature and by doubling we mean that the doubling constant of $\laak$ is $O(1)$. The classical notation $D=O(f(n))$ (resp. $D=\Omega(f(n))$) means that $D\le \alpha f(n)$ (resp. $D\ge \alpha f(n)$) for some constant $\alpha$ and for $n$ large enough. And $D=\Theta(f(n))$ if and only if $[D=O(f(n))] \land [D=\Omega(f(n))]$.} of $\ell_1$, the Laakso graph $\laak$, for which existence of a bi-Lipschitz embedding with distortion $D$ into $\ell_1^d$ implies that $D=\Omega(\sqrt{\log(n)/\log(d)})$, or equivalently there is no bi-Lipschitz embedding of $\laak$ with distortion $D$ in $\ell_p^k$ if $k=O(n^{1/D^2})$. Therefore, Problem \ref{prob:reductiondoubling} has a negative solution for $\ell_1$-targets. That Problem \ref{prob:reductiondoubling} also has a negative solution for $\ell_q$-targets for $q>2$ was proved independently by Y. Bartal, L.-A. Gottlieb, and O. Neiman \cite{BGN_SoCG14,BGN_SIAM15}, and V. Lafforgue and A. Naor \cite{LafforgueNaor14_GD}.
	
	\begin{theo}\label{theo:dimreduction}
		For every $q\in(2,\infty)$, there exists a doubling subset of $\ell_q$ that does not admit any bi-Lipschitz embedding into $\bR^d$ for any $d\in\bN$.
	\end{theo}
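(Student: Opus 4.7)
The plan is to produce the desired doubling subset as a carefully-scaled disjoint union of Laakso graphs $\{\laak\}_{k \in \bN}$ inside $\ell_q$. The strategy splits into three parts: a uniform upper embedding, a quantitative lower bound against finite-dimensional targets, and a gluing step to convert many pointed metric spaces into one doubling subset of $\ell_q$.

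\emph{Uniform upper embedding.} I would first show that for every $q \in (2,\infty)$ there exists a constant $C(q)$ so that each $\laak$ embeds bi-Lipschitzly into $\ell_q$ with distortion at most $C(q)$, independent of $k$. The natural approach exploits the self-similar recursive definition of $\laak$: an embedding of $\laak$ is built from one of $\mathsf{L}_{k-1}$ by replacing each edge with a Laakso gadget placed in a fresh block of coordinates, so that successive scales live in mutually orthogonal subspaces of $\ell_q$. A direct $\ell_q$-norm computation then yields the distortion bound inductively, using that $q<\infty$ to control the sum of the perturbations introduced at each scale.

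\emph{Lower bound against $\bR^d$.} The core step is to prove that any bi-Lipschitz embedding $f : \laak \to \bR^d$ has distortion at least $\varphi(k)/\psi(d)$, with $\varphi(k)\to\infty$ for every fixed $d$. In the spirit of Lafforgue--Naor, the idea is to use an asymptotic-convexity invariant that is finite on finite-dimensional spaces but diverges on Laakso graphs. A clean route goes through Markov $2$-convexity (Lee--Naor--Peres): every $d$-dimensional normed space is Markov $2$-convex with a constant $K(d)$ depending only on $d$, while the Markov $2$-convexity constant of $\laak$ grows with $k$. Exhibiting a Markov chain on $\laak$ that uses the branching structure of the gadgets at every depth gives the desired divergence, and a standard comparison turns this into the distortion bound. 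This very argument is what is advertised in the abstract as generalizing to non-positively curved and asymptotically midpoint uniformly convex targets.

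\emph{Gluing.} Finally, I would place rescaled copies $\lambda_k \laak$ into mutually orthogonal coordinate blocks of $\ell_q$, translated to suitable base points with $\lambda_k \to 0$ geometrically (say $\lambda_k = 2^{-k}$). For the union, any ball of radius $r$ in $\ell_q$ intersects nontrivially only a bounded number of scales $k$, and the intersection with each is contained in a rescaled ball of a single $\laak$, whose doubling constant is bounded uniformly in $k$. So the union is doubling in $\ell_q$. If it admitted a bi-Lipschitz embedding into some $\bR^d$ with distortion $D$, the restriction to each $\lambda_k \laak$ would have distortion at most $D$, contradicting the lower bound once $k$ is large.

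\emph{Main obstacle.} I expect the lower bound to be the hard step: the quantitative dependence on $d$ must be controlled precisely enough that the disjoint-union trick still produces a single doubling set that defeats all dimensions at once. A secondary difficulty is the upper embedding: the standard isometric embedding of $\laak$ into $L_1$ does not transfer directly to $\ell_q$ for $q>2$, so the recursive coordinate-block construction has to be designed and analyzed carefully in the $\ell_q$ geometry.
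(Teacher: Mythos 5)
Your plan founders at the very first step: the classical Laakso graphs $\laak$ do \emph{not} admit a uniform bi-Lipschitz embedding into $\ell_q$ for any finite $q$. Indeed $\ell_q$ is $q$-uniformly convex for $q\ge 2$, and the Laakso (and diamond) graphs are a classical metric obstruction to embeddability into uniformly convex spaces: by the Johnson--Schechtman/Laakso-type argument one has $\cdist{\ell_q}(\laak)=\Omega(k^{1/q})\to\infty$. (This is in fact a special case of the self-improvement phenomenon used in the paper; see Lemma~\ref{lemm:contraction_uc} and the remark after the definition of the thin substructure noting that $\laak$ corresponds to $\vep=\frac12$.) So the ``secondary difficulty'' you flag at the end is actually a fundamental impossibility, not a matter of careful coordinate bookkeeping, and the entire subsequent lower-bound and gluing argument has nothing to stand on.

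The way around this, which is the key idea of Bartal--Gottlieb--Neiman that the paper adopts, is to \emph{change the metric}: one uses the $(\vep,q)$-thin Laakso substructures $\cL_k(\vep,q)$, where the two midpoints are brought within distance $\vep\,\dX(s,t)$ of each other rather than $\tfrac12 \dX(s,t)$. Because the $\ell_q$-norm is $q$-convex, each such structure lives isometrically inside $\ell_q^{k+1}$ (Lemma~\ref{lemm:thinLaakso}), so there is no upper-embedding step to prove at all. The price is a weaker lower bound: one shows that if $\vep$ is chosen suitably small as a function of the target distortion $D$, any distortion-$D$ embedding of $\cL_k(\vep,q)$ into a $p$-uniformly convex space ($2\le p<q$) forces $D=\Omega(k^{1/p-1/q})$ (Theorem~\ref{theo:main1}), which suffices since $\bR^d$ is $2$-uniformly convex up to Banach--Mazur distance $\sqrt d$. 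This dependence of $\vep$ on $D$ also changes the gluing: the doubling subset $Z_q$ must be a union over \emph{both} the depth parameter and the distortion threshold, i.e. over pairs $(k,n)$, not just over one growing parameter as in your sketch. Your Markov $2$-convexity suggestion for the lower bound is reasonable in spirit (it is in the same family of self-improvement arguments as the one the paper uses via the modulus of uniform convexity and approximate midpoints), but it would still have to be reformulated for the thin structures with the $\vep$-vs-$D$ tradeoff made quantitative; as written, it is aimed at the wrong spaces.
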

	
	In Section \ref{sec:new-proof} we give a new proof of Theorem \ref{theo:dimreduction}.
	In order to put our contribution into perspective and to highlight the advantages and limits of our alternative proof,  we will discuss the two distinct approaches taken in \cite{BGN_SoCG14,BGN_SIAM15} and \cite{LafforgueNaor14_GD}, as well as their scopes of application.
	
	The approach undertaken by Lafforgue and Naor is based on classical, albeit subtle, geometric properties of Heisenberg groups. In \cite{LafforgueNaor14_GD}, Lafforgue and Naor construct for every $\vep\in(0,\frac12]$ and $q\in[2,\infty)$, an embedding $F_{\vep,q}\colon \bH_3(\bZ)\to L_q(\bR^s)$ such that $F_{\vep,q}(\bH_3(\bZ))$ is $2^{16}$-doubling and
	\begin{equation}\label{eq:LNeq}
		\forall x,y\in \bH_3(\bZ),\ \sd_{W}(x,y)^{1-\vep}\le \|F_{\vep,q}(x)-F_{\vep,q}(y)\|\lesssim \frac{\sd_W(x,y)^{1-\vep}}{\vep^{1/q}},
	\end{equation} 
	where $\sd_W$ is the canonical word metric on the discrete $3$-dimensional Heisenberg group $\bH_3(\bZ)$, and $\bR^s$ is some potentially high-dimensional Euclidean space equipped with the Lebesgue measure. The symbol $\lesssim$ will be conveniently use to hide a universal numerical mulitiplicative constant.
	
	The map $F_{\vep,q}$ is given by a rather elementary formula but showing that it is a bi-Lipschitz embedding of the $(1-\vep)$-snowflaking of $\bH_3(\bZ)$ as in \eqref{eq:LNeq}, and that the image is doubling requires some quite technical analytic computations \footnote{Lafforgue and Naor gave an alternate (and of similar difficulty) proof of \eqref{eq:LNeq} using the Schr\"odinger representation of Heisenberg groups that we do not discuss here.}. By taking $\vep=1/\log n$ in \eqref{eq:LNeq}, the map $F_{1/\log n,q}$ becomes a bi-Lipschitz embedding with distortion $O((\log n)^{1/q})$ of the ball of radius $\sqrt[4]{n}$ into $L_q$ (whose image inherits the doubling property of $F_{\vep,q}(\bH_3(\bZ))$). Since $\bH_3(\bZ)$ is a finitely generated group of quartic growth, for every $n\ge 1$ there exists a $n$-point subset $X_n\subset\bH_3(\bZ)$ lying in an annulus enclosed by two balls with radii proportional to $\sqrt[4]{n}$. The image of $X_n$ under $F_{1/\log n,q}$, which will be denoted $\cH_n(q)$, is $2^{16}$-doubling. A significant advantage of the Heisenberg-based approach of Lafforgue and Naor is that it provides non-embeddability results for the doubling subset $\cH_n(q)$ of $\ell_q$ for a wide class of Banach space targets. It is indeed possible to leverage some deep non-embeddability results available for the subset $X_n$ of $\bH_3(\bZ)$, to derive lower bounds on the distortion of $\cH_n(q)$ when embedding $\cH_n(q)$ into any $p$-uniformly convex Banach space for $2\le p<q$ and even into $L_1$. 
	
	Let $\cdist{\metY}(\metX)$ denote the $\metY$-distortion of $\metX$ for two metric spaces $\metXd$ and $\metYd$. The following theorem is a quantitatively explicit version (and updated according to the most recent available bounds) of Theorem~1.2 in \cite{LafforgueNaor14_GD}.
	
	\begin{theo}\label{theo:LN}
		For every $q\in(2,\infty)$ and every $n\in\bN$, there exists a $2^{16}$-doubling $n$-point subset $\cH_n(q)$ of $\ell_q$ such that 
		\begin{enumerate}
			\item\label{stat:LN1} $\cdist{\banY}(\cH_n(q))=\Omega((\log n)^{\frac1p-\frac1q})$ if $\banY$ is a $p$-uniformly convex Banach space for $2\le p<q$
			\item\label{stat:LN2} $\cdist{L_1}(\cH_n(q))=\Omega((\log n)^{\frac14})$.
		\end{enumerate}
		Moreover, for every $q\in(2,\infty)$, there exists a doubling subset $\cH(q)$ of $\ell_q$ that does not admit a bi-Lipschitz embedding into $L_1$ or into a $p$-uniformly convex Banach space for any $p\in(1,q)$.
	\end{theo}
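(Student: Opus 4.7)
The plan is to follow the Lafforgue-Naor strategy outlined in the introduction: the embedding \(F_{\vep,q}\) from \eqref{eq:LNeq} already produces a doubling subset of \(\ell_q\) out of \(\bH_3(\bZ)\), and it suffices to specialize \(\vep\) so that the \((1-\vep)\)-snowflake becomes a genuine bi-Lipschitz embedding on a well-chosen finite piece, then import known non-embeddability lower bounds for balls in \(\bH_3(\bZ)\).

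Set \(\vep_n := 1/\log n\), let \(F_n := F_{\vep_n, q}\), and use the quartic volume growth of \(\bH_3(\bZ)\) to select an \(n\)-point set \(X_n \subset \bH_3(\bZ)\) inside an annulus \(\{x : c\sqrt[4]{n} \le \sd_W(e,x) \le \sqrt[4]{n}\}\), so that all pairwise distances inside \(X_n\) are of order \(\sqrt[4]{n}\). Define \(\cH_n(q) := F_n(X_n)\); it inherits the \(2^{16}\)-doubling constant from \(F_n(\bH_3(\bZ))\). Since \(1 \le \sd_W(x,y) \lesssim \sqrt[4]{n}\) for \(x \ne y\) in \(X_n\),
\[
\sd_W(x,y)^{1/\log n} \,=\, \exp\!\lrpar{\frac{\log \sd_W(x,y)}{\log n}} \,=\, \Theta(1),
\]
so \(\sd_W^{1-\vep_n} \asymp \sd_W\) uniformly on \(X_n\); substituting in \eqref{eq:LNeq} shows that \(F_n|_{X_n}\colon X_n \to \cH_n(q)\) is bi-Lipschitz with distortion \(\lesssim (\log n)^{1/q}\).

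Given \(f\colon \cH_n(q) \to \banY\) with distortion \(D\), the composition \(f \circ F_n|_{X_n}\colon X_n \to \banY\) then has distortion \(\lesssim D(\log n)^{1/q}\) as a map on the unsnowflaked word metric. Statement (\ref{stat:LN1}) follows from the Lafforgue-Naor lower bound \(\cdist{\banY}(X_n) \gtrsim (\log n)^{1/p}\) valid for any \(p\)-uniformly convex \(\banY\) with \(2 \le p < q\), yielding \(D \gtrsim (\log n)^{1/p-1/q}\). Statement (\ref{stat:LN2}) follows from the Naor-Young sharp lower bound \(\cdist{L_1}(X_n) \gtrsim \sqrt{\log n}\), which after dividing by the \((\log n)^{1/q}\) factor produces the claimed \((\log n)^{1/4}\) bound.

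For the infinite ``moreover'' statement, place each \(\cH_n(q)\) in its own finite-dimensional coordinate block of \(\ell_q\) at a suitably decaying scale and take the union \(\cH(q)\); a standard scaling construction keeps \(\cH(q)\) doubling, while any bi-Lipschitz embedding of \(\cH(q)\) into \(L_1\) or into a \(p\)-uniformly convex space would restrict to uniform-distortion embeddings of each \(\cH_n(q)\), contradicting the diverging finite lower bounds above. The analytic input is absorbed entirely into \eqref{eq:LNeq} and the cited non-embeddability bounds for \(X_n \subset \bH_3(\bZ)\); the only delicate step in the argument itself is the simultaneous choice of the annulus radius \(\sqrt[4]{n}\) and \(\vep_n = 1/\log n\), which is precisely what turns the blowup factor \(\vep_n^{-1/q}\) in \eqref{eq:LNeq} into a \((\log n)^{1/q}\) loss without damaging the bi-Lipschitz property on \(X_n\).
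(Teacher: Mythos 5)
Your outline matches the paper's own (very brief) argument exactly: take the Lafforgue--Naor snowflake embedding $F_{\vep,q}$, set $\vep=1/\log n$, restrict to an $n$-point annulus $X_n$ where $\sd_W^{1-\vep}\asymp\sd_W$, and transfer the Heisenberg lower bounds through the resulting $O((\log n)^{1/q})$-distortion bi-Lipschitz map; the ``moreover'' part is the standard rescaled-disjoint-union gluing. Assertion~\eqref{stat:LN1} is executed correctly: the Lafforgue--Naor $p$-uniformly-convex lower bound $(\log n)^{1/p}$ divided by $(\log n)^{1/q}$ gives $(\log n)^{1/p-1/q}$.

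For assertion~\eqref{stat:LN2}, however, your computation has a concrete slip. You assert that $\cdist{L_1}(X_n)\gtrsim\sqrt{\log n}$ and that dividing by $(\log n)^{1/q}$ ``produces the claimed $(\log n)^{1/4}$ bound,'' but $\sqrt{\log n}\big/(\log n)^{1/q}=(\log n)^{1/2-1/q}$, which equals $(\log n)^{1/4}$ only at $q=4$. Moreover, the cited Naor--Young theorem proving the $\sqrt{\log R}$ lower bound for balls of radius $R$ applies to the higher-rank Heisenberg groups $\bH^{2k+1}$ with $k\ge 2$; the Lafforgue--Naor construction lives in $\bH_3(\bZ)$, for which the sharp $L_1$-distortion of $B_R$ is $\Theta((\log R)^{1/4})$ (this is precisely what the later Naor--Young ``foliated corona decompositions'' paper, cited in the paper's Remark after Corollary~\ref{cor:BSS}, establishes). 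Feeding the correct $\bH_3$ exponent into the composition yields $\cdist{L_1}(\cH_n(q))\gtrsim(\log n)^{1/4-1/q}$, not $(\log n)^{1/4}$. You should flag that this bound is vacuous for $q\le 4$, so the ``moreover'' claim of non-embeddability of $\cH(q)$ into $L_1$ for $q\in(2,4]$ does not follow from this composition argument alone and needs either a separate justification or a different Heisenberg input. In short, the framework is right, but the $L_1$ bookkeeping is not: you must track the loss $(\log n)^{-1/q}$ and use the exponent appropriate to $\bH_3$ rather than the one for $\bH^5$.
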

	
	Assertions $\eqref{stat:LN1}$ and $\eqref{stat:LN2}$ in Theorem \ref{theo:LN} follow from the above discussion of the Lafforgue-Naor approach and sharp non-embeddability of Heisenberg balls into $p$-uniformly convex spaces (\cite{LafforgueNaor14_IJM}, which refines earlier results from \cite{ANT13}), and into $L_1$ (more specifically \cite{NaorYoung18}, which improves the lower bound in \cite{CKN11}). The moreover part of Theorem \ref{theo:LN} follows from a standard argument where $\cH(q)$ is a certain disjoint union of the sequence $\{\cH_n(q)\}_{n\in\bN}$ and which contains an isometric copy of a rescaling of $\cH_n(q)$ for every $n\in \bN$. 
	
	The derivation of Theorem \ref{theo:dimreduction} from Theorem \ref{theo:LN}, which follows from the fact that we can assume without loss of generality that the constant finite-dimensional space is $2$-uniformly convex, is standard. Another consequence of assertion $\eqref{stat:LN1}$ in Theorem \ref{theo:LN} and classical estimates on the Banach-Mazur distance between finite-dimensional $\ell_r$-spaces is the following corollary.
	
	\begin{coro}\label{coro:LN}
		For every $q\in(2,\infty)$ and every $n\in\bN$, there exists a $2^{16}$-doubling $n$-point subset $\cH_n(q)$ of $\ell_q$ such that 
		\begin{enumerate}
			\item $\cdist{\ell_q^d}(\cH_n(q))=\Omega\Big(\Big(\frac{\log n}{d}\Big)^{\frac12-\frac1q}\Big)$.
			\item $\cdist{\ell_p^d}(\cH_n(q))=\Omega\Big(\Big(\frac{\log n}{d}\Big)^{\min\big\{\frac12,\frac1p\big\}-\frac1q}\Big)$ if $1<p<q$.
		\end{enumerate}
	\end{coro}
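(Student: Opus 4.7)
The plan is to apply Theorem \ref{theo:LN}(1) either directly with $\banY=\ell_p$ or, when the uniform convexity of $\ell_p$ is insufficient, through a linear isomorphism to $\ell_2^d$. The auxiliary ingredient is the classical Banach--Mazur estimate
\[d_{BM}(\ell_p^d,\ell_2^d)\le d^{|1/p-1/2|},\]
realized by the formal identity map, which converts any bi-Lipschitz embedding $\cH_n(q)\to\ell_p^d$ of distortion $D$ into a bi-Lipschitz embedding $\cH_n(q)\to\ell_2^d$ of distortion at most $D\cdot d^{|1/p-1/2|}$.

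For assertion (1), the space $\ell_q$ is only $q$-uniformly convex, while Theorem \ref{theo:LN}(1) requires uniform convexity of exponent strictly less than $q$, so I would pass through $\ell_2^d$. Starting from an embedding $f\colon\cH_n(q)\to\ell_q^d$ of distortion $D$, post-composing with an isomorphism $T\colon\ell_q^d\to\ell_2^d$ satisfying $\|T\|\cdot\|T^{-1}\|\le d^{1/2-1/q}$ produces an embedding into $\ell_2^d$ of distortion at most $D\cdot d^{1/2-1/q}$. Since $\ell_2$ is $2$-uniformly convex with dimension-free constant, Theorem \ref{theo:LN}(1) applied with $\banY=\ell_2$ gives $D\cdot d^{1/2-1/q}\gtrsim (\log n)^{1/2-1/q}$, hence $D\gtrsim((\log n)/d)^{1/2-1/q}$.

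For assertion (2), with target $\ell_p^d$ and $1<p<q$, I would invoke Theorem \ref{theo:LN}(1) directly with $\banY=\ell_p$, using the correct uniform convexity exponent of $L_p$. If $p\ge 2$, then $\ell_p$ is $p$-uniformly convex and $p<q$, giving
\[\cdist{\ell_p^d}(\cH_n(q))\ge\cdist{\ell_p}(\cH_n(q))\gtrsim (\log n)^{1/p-1/q}\ge \bigl((\log n)/d\bigr)^{1/p-1/q}.\]
If $1<p\le 2$, then $\ell_p$ is $2$-uniformly convex and $2<q$, giving
\[\cdist{\ell_p^d}(\cH_n(q))\gtrsim (\log n)^{1/2-1/q}\ge \bigl((\log n)/d\bigr)^{1/2-1/q}.\]
In both cases the exponent coincides with $\min\{1/2,1/p\}-1/q$, matching the statement.

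No step poses a genuine obstacle; the argument is a routine verification combining isomorphic comparison of finite-dimensional $\ell_p$-spaces with the sharp nonembeddability from Theorem \ref{theo:LN}(1). The only care required is selecting the correct uniform convexity exponent of $L_p$ (namely $\max\{2,p\}$) and, for assertion (1) where no direct uniform convexity of $\ell_q$ is strong enough to be plugged into Theorem \ref{theo:LN}(1), performing the Banach--Mazur reduction to $\ell_2^d$.
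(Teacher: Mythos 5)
Your proposal is correct, and the overall strategy---combine assertion~(1) of Theorem~\ref{theo:LN} with Banach--Mazur comparisons of finite-dimensional $\ell_r$-spaces---is exactly the route the paper indicates. For assertion~(1) you must pass through $\ell_2^d$ (since $\ell_q$ itself is only $q$-uniformly convex and Theorem~\ref{theo:LN}\,(1) requires an exponent strictly below $q$), and you do this correctly with the factor $d^{1/2-1/q}$. For assertion~(2) you observe, rightly, that Theorem~\ref{theo:LN}\,(1) applies directly to $\ell_p^d$ with exponent $\max\{2,p\}$; this in fact yields the stronger dimension-free lower bound $\cdist{\ell_p^d}(\cH_n(q))\gtrsim(\log n)^{\min\{1/2,1/p\}-1/q}$, which trivially implies the stated $((\log n)/d)^{\min\{1/2,1/p\}-1/q}$ since $\min\{1/2,1/p\}-1/q>0$. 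It is worth noting that a purely Banach--Mazur route for assertion~(2) (reducing to $\ell_2^d$ as in (1)) would actually give a different, and in some regimes weaker, lower bound $(\log n)^{1/2-1/q}/d^{|1/p-1/2|}$, so your choice to apply the theorem directly in case~(2) is the right one; the paper's written form with $d$ in the denominator appears to be for consistency with assertion~(1) and with Theorem~\ref{theo:BGN} rather than because the $d$-dependence is needed.
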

	
	It is worth pointing out that the case $q=2$ in assertion $\eqref{stat:LN1}$ of Theorem \ref{theo:LN} also follows from an important Poincar\'e-type inequality for the Heisenberg group \cite[Theorem 1.4 and Corollary 1.6]{ANT13} which is a precursor of a groundbreaking line of research pertaining to Poincar\'e-type inequalities in terms of horizontal versus vertical perimeter in Heisenberg groups. 
	
	We now turn to the approach of Bartal, Gottlieb, and Neiman.
	
	\begin{theo}\label{theo:BGN}\cite{BGN_SIAM15}
		Let $q\in(2,\infty)$, $D\ge 1$, and $d\in\bN$. For every $n\in\bN$ there exists a $n$-point subset $\cL_n(p,q,D,d)$ of $\ell_q$ that is $2^{32}$-doubling and such that any bi-Lipschitz embedding of $\cL_n(p,q,D,d)$ with distortion $D$ into $\ell_p^d$ must satisfy 
		\begin{enumerate}
			\item\label{stat:BGN1} $D=\Omega\Big(\Big(\frac{\log n}{d}\Big)^{\frac12-\frac1q}\Big)$ if  $p=q$
			\item[] and
			\item\label{stat:BGN2} $D=\Omega\left(\frac{(\log n)^{\frac12-\frac1q}}{d^{\frac{\max\{p-2,2-p\}}{2p}}}\right)$ if $1\le p<q$.\end{enumerate}
	\end{theo}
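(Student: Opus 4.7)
The plan is to realize $\cL_n(p,q,D,d)$ as the image in $\ell_q$ of a carefully weighted Laakso-type graph, and then derive the distortion lower bound by combining a Hilbertian non-embeddability estimate with Banach--Mazur equivalence between $\ell_p^d$ and $\ell_2^d$, following the general approach of \cite{BGN_SIAM15}.

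First I would recursively construct a Laakso graph $\laak$ with $n = \Theta(6^k)$ vertices, and build an explicit embedding $\phi_q \colon \laak \to \ell_q$ via a $q$-adapted weighting: each edge at Laakso scale $j$ gets its own block of coordinates with an amplitude $\alpha_q(j)$ tuned so that the resulting image $\cL_n := \phi_q(\laak)$ is doubling in $\ell_q$ with a universal constant. The self-similarity of the construction should allow one to keep this doubling constant at most $2^{32}$, independently of $k$, $q$, $p$, and $d$.

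The heart of the argument would then be a Poincar\'e-type Hilbertian lower bound
\[
\cdist{\ell_2}(\cL_n) \;=\; \Omega\big((\log n)^{1/2 - 1/q}\big),
\]
which I would obtain by averaging $\|f(u)-f(v)\|_2^2$ over pairs of ``antipodal'' vertices in each Laakso diamond, telescoping across the $k = \Theta(\log n)$ hierarchical scales, and tracking how the $q$-dependent amplitudes $\alpha_q(j)$ calibrate the ratio between intrinsic Laakso-scale distances and ambient $\ell_q$ distances. The finite-dimensional conclusions would then follow by Banach--Mazur equivalence: given a bi-Lipschitz embedding $F \colon \cL_n \to \ell_p^d$ with distortion $D$, composition with the identity $(\bR^d, \|\cdot\|_p) \to (\bR^d, \|\cdot\|_2)$ produces an embedding of $\cL_n$ into $\ell_2$ with distortion at most $D \cdot d^{|1/2 - 1/p|} = D \cdot d^{\max\{p-2,\,2-p\}/(2p)}$. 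Combined with the Hilbertian lower bound, this forces
\[
D \;=\; \Omega\bigg(\frac{(\log n)^{1/2 - 1/q}}{d^{\max\{p-2,\,2-p\}/(2p)}}\bigg),
\]
which simultaneously yields the first assertion (taking $p = q$, so that $|1/2 - 1/p| = 1/2 - 1/q$) and the second assertion.

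The hard part will be the calibration in the first step: choosing the amplitudes $\alpha_q(j)$ so that $\cL_n$ is simultaneously doubling in $\ell_q$ with a universal constant and exhibits Euclidean distortion at least $(\log n)^{1/2 - 1/q}$. Too aggressive an amplitude sequence destroys the doubling property, while too mild a sequence weakens the Hilbertian lower bound; the precise balance between these two requirements is what forces the exponent $1/2 - 1/q$. Verifying that this balance can be achieved with a doubling constant as small as $2^{32}$, independent of $k$ and $q$, is where most of the combinatorial and metric bookkeeping will concentrate.
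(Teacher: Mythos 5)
Your approach is genuinely different from the one in \cite{BGN_SIAM15}, and in fact closer to the route this paper takes for Corollary~\ref{cor:BSS}. The original BGN proof works directly with maps into $\ell_p^d$ via \emph{potential functions} $\Phi_{p,q}(u,v)=\frac{\|f(u)-f(v)\|_p^p}{\|u-v\|_q^p}$, decomposing such maps into $d$ scalar coordinate functions; it never passes through $\ell_2$. You instead propose to prove a Hilbertian distortion lower bound for the thin Laakso substructure and then pull it back to $\ell_p^d$ via the Banach--Mazur estimate $d(\ell_p^d,\ell_2^d)\le d^{|1/2-1/p|}$; the arithmetic $|1/2-1/p|=\frac{\max\{p-2,2-p\}}{2p}$ makes your conclusion coincide with both assertions of the stated theorem (with assertion~\eqref{stat:BGN1} being the case $p=q$). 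This factorization through $\ell_2$ is exactly how the present paper derives Corollary~\ref{cor:BSS}\eqref{stat:BSS2}, except that the Hilbertian lower bound here is obtained by a self-improvement/iterated-contraction argument rather than your proposed quadrilateral-inequality telescoping. Both routes are viable, but the self-improvement version is the one that cleanly survives the fact that the thin Laakso substructure is \emph{not} a graph metric: as the paper emphasizes, $\dX(s,m_1)<\dX(s,a)+\dX(a,m_1)$, so the naive Lee--Naor-style telescoping in which ``edges at level $j$ become diagonals at level $j+1$'' does not close exactly and leaks a multiplicative factor $\frac{1+2^{q-1}\vep^q}{(1+\vep^q)^{1/q}}$ per level that must be offset against the contraction $1-\frac{c\vep^2}{D^2}$. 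Your sketch acknowledges this calibration is the hard part but does not pin down the required thinness; the right scaling is $\vep\sim\gamma k^{-1/q}$, and if you carry that through you will actually obtain a subset depending only on $n$ and $q$ (not on $D$, $p$, or $d$), which is a modest strengthening of the theorem as stated (where the subset is allowed to depend on $p,D,d$) and also of Corollary~\ref{cor:BSS} in the paper (where it still depends on $D$). One more caveat: your phrasing speaks of amplitudes $\alpha_q(j)$ varying with the scale $j$; in the BGN/paper construction the thinness parameter $\vep$ is the same at every level and only the ambient edge length rescales, so the per-level amplitude in a fresh coordinate is automatically $\frac{\vep}{2}\|s-t\|_q$. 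That detail matters for the doubling bound $2^{32}$, which requires $\vep<2/17$ uniformly across levels.
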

	
	A conceptual difference between Theorem \ref{theo:BGN} and Corollary \ref{coro:LN} is that in Theorem \ref{theo:BGN} the finite doubling subsets depend on the distortion, the dimension, and also the host space. Consequently, the sequence $\{\cL_n(p,q,D,d)\}_{n\ge 1}$ only rules out bi-Lipschitz embeddings for fixed distortion and dimension. Nevertheless, one can still derive Theorem \ref{theo:dimreduction} from Theorem \ref{theo:BGN}. This derivation, which was omitted in \cite{BGN_SoCG14,BGN_SIAM15}, will be recalled at the end of Section \ref{sec:new-proof}. The doubling subset $\cL_n(p,q,D,d)$ of $\ell_q$ is based on an elementary construction of a $\Theta(6^k)$-point Laakso-like structure in $\ell_q^k$ that we will recall in Section \ref{sec:new-proof} since our new proof of Theorem \ref{theo:dimreduction} uses the same construction. The combinatorial proof of Theorem \ref{theo:BGN} in \cite{BGN_SIAM15} utilizes a newly introduced method based on potential functions, i.e. functions of the form $\Phi_{p,q}(u,v)=\frac{\|f(u)-f(v)\|_p^p}{\|u-v\|_q^p}$ for some $p,q$, where $\{u,v\}$ is an ``edge" of $\cL_n(p,q,D,d)$. The method of potential functions relies heavily on the fact that every map taking values into $\ell_p^d$ can be decomposed as a sum of $d$ real-valued (coordinate) maps, and this method does not seem to be easily extendable to more general Banach space targets. 
	
	In Section \ref{sec:new-proof} we present a new proof of Theorem \ref{theo:dimreduction}. The doubling subsets are identical to the ones of Bartal-Gottlieb-Neiman and they are described in Section \ref{sec:thin}. The proof uses a self-improvement argument, which was first employed for metric embedding purposes by Johnson and Schechtman in \cite{JS09}, and subsequently in \cite{Kloeckner14}, \cite{BaudierZhang16}, \cite{Baudier-et-all-JFA}, \cite{Swift18}, and \cite{Zhang}; and is carried over in Section \ref{sec:self-improv}. Our proof has several advantages. We prove an analog of Theorem \ref{theo:BGN} where the $n$-point doubling subset can be chosen independently of the dimension and improve the estimates in assertion \ref{stat:BGN2}. Moreover, the self-improvement approach is rather elementary and yet covers the case of uniformly convex target spaces as in the work of Lafforgue-Naor. However, it does not allow the recovery of the case of an $L_1$ target as in assertion $\eqref{stat:LN2}$ of Theorem \ref{theo:LN}.  The fact that we will be dealing with abstract metric structures that are not graph metrics requires a significantly more delicate implementation of the self-improvement argument. In Section \ref{sec:lp-uc} we explain how the new proof allows us to derive known tight lower bounds for the distortion of $\ell_p^n$ into uniformly convex spaces. It is worth mentioning that the lower bounds that can be derived from the Bartal-Gottlieb-Neiman approach and the Lafforgue-Naor approach seem to be often suboptimal. In Section \ref{sec:npc} we extend the technique to cover purely metric targets of non-positive curvature and more generally rounded ball metric spaces. Finally, in Section \ref{sec:auc} we extend our approach to the asymptotic Banach space setting. For this purpose, we construct countably branching analogs of the structures introduced by Bartal, Gottlieb, and Neiman that provide quantitative obstructions to embeddability into asymptotically midpoint uniformly convex spaces.

	\section{Impossibility of dimension reduction in $\ell_q$, $q>2$}
	\label{sec:new-proof}
	
	\subsection{Thin Laakso substructures}
	\label{sec:thin}
	Let us recall first a procedure to construct recursively certain sequences of graphs such as the classical diamond graphs $\{\diak\}_{k\in\bN}$ and Laakso graphs $\{\laak\}_{k\in\bN}$, and their countably branching analogues $\{\sD_{\text{k}}^{\omega}\}_{k\in\bN}$ and $\{\sL_{\text{k}}^{\omega}\}_{k\in\bN}$. 
	
	A directed $s$-$t$ graph $\gra G=(V,E)$ is a directed graph which has two distinguished vertices $s,t\in V$. To avoid confusion, we will also write sometimes $s(\gra G)$ and $t(\gra G)$. There is a natural way to ``compose'' directed $s$-$t$ graphs using the $\oslash$-product defined in \cite{LeeRag10}. Informally, the $\oslash$ operation replaces all the edges of an $s$-$t$ graph by identical copies of a given $s$-$t$-graph. Given two directed $s$-$t$ graphs $\gra H$ and $\gra G$, define a new graph $\gra H\oslash \gra G$ as follows:
	\begin{enumerate}[i)]
		\item $V(\gra H\oslash \gra G) \eqd V(\gra H)\cup(E(\gra H)\times(V(\gra G)\backslash\{s(\gra G),t(\gra G)\}))$
		\item For every oriented edge $e=(u,v)\in E(\gra H)$, there are $|E(\gra G)|$ oriented edges,
		\begin{align*}
			&\big\{(\{e,v_1\},\{e,v_2\})\mid (v_1,v_2)\in E(\gra G) \text{ and }v_1,v_2\notin \{s(\gra G),t(\gra G)\}\big\}\\
			\cup &\big\{(u,\{e,w\}) \mid (s(\gra G),w)\in E(\gra G)\big\}\cup \big\{(\{e,w\},u) \mid (w, s(\gra G))\in E(\gra G)\big\}\\
			\cup& \big\{(\{e,w\},v) \mid (w,t(\gra G))\in E(\gra G)\big\}\cup \big\{(v,\{e,w\}) \mid (t(\gra G),w)\in E(\gra G)\big\}
		\end{align*}
		\item $s(\gra H\oslash \gra G) \eqd s(\gra H)$ and $t(\gra H\oslash \gra G) \eqd t(\gra H)$.
	\end{enumerate}

	It is clear that the $\oslash$-product is associative (in the sense of graph-isomorphism or metric space isometry), and for a directed graph $\gra G$ one can recursively define $\gra G^{\oslash^{k}}$ for all $k\in\bN$ as follows:
	\begin{itemize}
		\item $\gra G^{\oslash^{1}} \eqd \gra G$.
		\item $\gra G^{\oslash^{k+1}} \eqd \gra G^{\oslash^{k}}\oslash \gra G$, for $k\ge 1$.  
	\end{itemize}
	
	Note that it is sometimes convenient, for some induction purposes, to define $\gra G^{\oslash^{0}}$ to be the two-vertex graph with an edge connecting them. Note also that if the base graph $\gra G$ is symmetric the graph $\gra G^{\oslash^{k}}$ does not depend on the orientation of the edges.
	
	\medskip
	If one starts with the $4$-cycle $\gra C_4$, the graph $\diak \eqd \gra{C}_4^{\oslash^{k}}$ is the diamond graph of depth $k$. The countably branching diamond graph of depth $k$ is defined as $\sD_k^\omega \eqd \com_{2,\omega}^{\oslash^{k}}$, where $\com_{2,\omega}$ is the complete bipartite infinite graph with two vertices on one side, (such that one is $s(\com_{2,\omega})$ and the other $t(\com_{2,\omega})$), and countably many vertices on the other side. The Laakso graph $\sL_k\eqd \sL_1^{\oslash^{k}}$ where the base graph $\sL_1$ is the graph depicted below. 
	
	The Laakso graphs do not admit bi-Lipschitz embeddings into any uniformly convex Banach space, in particular into $\ell_p$ when $p\in(1,\infty)$, and this is due to the fact that there are, at all scales, midpoints that are far apart. The idea of Bartal, Gottlieb, and Neiman was to slightly tweak the Laakso construction by reducing the distance between the midpoints so that these modified metric structures could fit into $\ell_p^k$ for arbitrarily large dimension $k$ but not into $\ell_p^d$ for fixed $d$ without incurring a large distortion. It will be convenient to abstract the construction of Bartal, Gottlieb, and Neiman and to that end, we introduce the following definition.
	
	\begin{defi}[Thin Laakso substructure]
		Let $q\in[1,\infty]$ and $\vep>0$. For $k\in \bN$, we say that a metric space $\metXd$ admits a \emph{$(\vep,q)$-thin $k$-Laakso substructure}, if there exists a collection of points $\cL_k(\vep,q)\subset \metX$ indexed by $\sL_k$ (and we will   identify the points in $\cL_k(\vep,q)$ with the corresponding points in $\cL_k$) such that for every $1\le j\le k$ and for all $\{s, a, m_1, m_2, b, t\}\subset \cL_k(\vep,q)$ indexed by any copy of the Laakso graph $\sL_1$ created at level $j$, the following interpoint distance equalities hold:
		\begin{enumerate}
			\item[($c_1$)] $\dX(s,a)=\dX(b,t)=\frac12 \dX(a,b)=\frac14\dX(s,t)>0$
			\item[($c_2$)] $\dX(s,b)=\dX(a,t)=\frac34\dX(t,s)$
			\item[($c_3$)]$\dX(m_1,a)=\dX(m_1,b)=\dX(m_2,a)=\dX(m_2,b)=\frac14(1+(2\vep)^q)^{1/q}\dX(s,t)$
			\item[($c_4$)]\label{eq:c4}$\dX(s,m_1)=\dX(m_2,s)=\dX(m_1,t)=\dX(m_2,t)=\frac12(1+\vep^q)^{1/q}\dX(s,t)$
			\item[($c_5$)]$\dX(m_1,m_2)=\vep \cdot \dX(s,t)$ (midpoint separation).
		\end{enumerate}
	\end{defi}
	
	The distances in the combinatorial Laakso graph, which is the template for the construction, satisfy $(c_1)-(c_4)$ with $\vep=0$, and $(c_5)$ with $\vep=\frac12$, and the distances for a path graph with 4 points would satisfy $(c_1)-(c_5)$ with $\vep=0$. The following diagram can help visualize the differences between the distances in the Laakso graph $\sL_1$ and the $(\vep,q)$-thin $1$-Laakso substructure $\cL_1(\vep,q)$ construction.
	
	\begin{figure}[H]
		\label{fig:Laakso}
		\caption{Distances in Laakso graph $\sL_1$ and distances in $\vep$-thin Laakso structure $\cL_1(\vep,q)$ in $\ell_q^2$}
		\vskip 0.2cm
		\hskip 2cm\includegraphics[scale=.7]{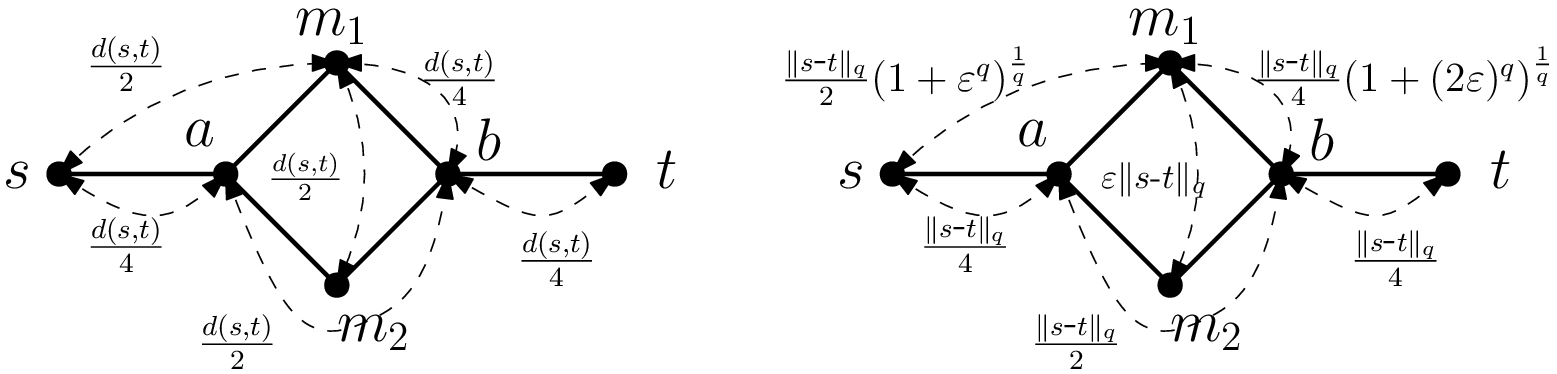}
		\end{figure}
	
	The existence of $(\vep,q)$-thin $k$-Laakso substructures in $\ell_q$ was proven in \cite{BGN_SIAM15}. Since we use different notation and a slightly different thinness parameter we will reproduce the proof for the convenience of the reader.
	
	\begin{lemm}\label{lemm:thinLaakso}
		Let $q\in[1,\infty]$. For all $k\in\bN$ and $\vep>0$, $\ell_q^{k+1}$ admits a $(\vep,q)$-thin $k$-Laakso substructure.
	\end{lemm}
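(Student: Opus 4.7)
My plan is to induct on $k$, building $\cL_k(\vep,q)$ from $\cL_{k-1}(\vep,q)$ by introducing a single new coordinate direction $e_{k+1}$ at each step to accommodate the freshly-inserted level-$k$ midpoints.

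For the base case $k=1$, place the six points of $\cL_1(\vep,q)$ in $\ell_q^2$ as $s = (0,0)$, $a = (1,0)$, $b = (3,0)$, $t = (4,0)$, $m_1 = (2,2\vep)$, $m_2 = (2,-2\vep)$. Since $\norm{(x,y)}_q = (|x|^q + |y|^q)^{1/q}$, a routine calculation yields $(c_1)$--$(c_5)$ with $d(s,t) = 4$; for instance $\norm{m_1 - a}_q = (1 + (2\vep)^q)^{1/q} = \tfrac{1}{4}(1+(2\vep)^q)^{1/q}\,d(s,t)$ and $\norm{m_1 - s}_q = (2^q + (2\vep)^q)^{1/q} = \tfrac{1}{2}(1+\vep^q)^{1/q}\,d(s,t)$.

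For the inductive step, assume $\cL_{k-1}(\vep,q) \subset \ell_q^k$ has been built, and view it inside $\ell_q^{k+1}$ via the natural isometric inclusion into $\mathrm{span}(e_1,\ldots,e_k)$. For every edge $(u,v)$ of $\sL_{k-1}$, corresponding to a pair of points of $\cL_{k-1}(\vep,q)$ at distance $D := \norm{v-u}_q$, insert the four new points
\[
a_{u,v} = u + \tfrac{1}{4}(v-u), \quad b_{u,v} = u + \tfrac{3}{4}(v-u),
\]
\[
m_{1,u,v} = u + \tfrac{1}{2}(v-u) + \tfrac{\vep D}{2}\,e_{k+1}, \quad m_{2,u,v} = u + \tfrac{1}{2}(v-u) - \tfrac{\vep D}{2}\,e_{k+1},
\]
and declare the 6-point set $\{u, a_{u,v}, m_{1,u,v}, m_{2,u,v}, b_{u,v}, v\}$ to be the level-$k$ copy of $\sL_1$ associated to the edge $(u,v)$. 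Verifying $(c_1)$--$(c_5)$ for this configuration reduces to the coordinate-splitting identity $\norm{x + c\,e_{k+1}}_q = (\norm{x}_q^q + |c|^q)^{1/q}$ valid whenever $x \in \mathrm{span}(e_1,\ldots,e_k)$ --- a trivial property of the $\ell_q$ norm --- which gives for instance $\norm{m_{1,u,v} - u}_q = \tfrac{D}{2}(1+\vep^q)^{1/q}$ and $\norm{m_{1,u,v} - a_{u,v}}_q = \tfrac{D}{4}(1+(2\vep)^q)^{1/q}$, matching $(c_3)$ and $(c_4)$ exactly.

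For copies of $\sL_1$ at lower levels $j<k$, all six indexed points lie in the embedded copy of $\cL_{k-1}(\vep,q)$, so the required constraints are inherited from the inductive hypothesis. There is no genuine obstacle; the only bookkeeping subtlety is that different edges of $\sL_{k-1}$ correspond to different base distances $D_{u,v}$ in $\cL_{k-1}(\vep,q)$ (outer edges being shorter than middle edges), but the construction is self-correcting since each level-$k$ insertion is scaled by the appropriate local $D$ while the same thinness parameter $\vep$ is reused at every level.
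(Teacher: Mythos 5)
Your proof is correct and follows essentially the same inductive construction as the paper: the base case places $\cL_1(\vep,q)$ in $\ell_q^2$ (the paper uses the centered coordinates $s=-e_1$, $t=e_1$, $m_i=\pm\vep e_2$, but these differ from yours only by an affine change of variables), and the inductive step introduces one new coordinate direction per level and scales each new pair of midpoints by the local edge length $\|s-t\|_q$ so that the coordinate-splitting identity for the $\ell_q$-norm yields $(c_1)$--$(c_5)$. The only minor slips are cosmetic: your two displayed identities are labelled as ``$(c_3)$ and $(c_4)$'' in the opposite order from the definition, and you do not need to say anything about $q=\infty$ even though the lemma nominally allows it (the construction degenerates there since $(1+(2\vep)^\infty)^{1/\infty}=1$, and indeed the paper's proof also tacitly assumes $q<\infty$).
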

	
	\begin{proof}
		Let $\{e_i\}_{i=1}^{k+1}$ be the canonical basis of $\ell_q^{k+1}$. The proof is by induction on $k$. If $k=1$ then $\sL_1=\{s,a,m_1,m_2,b,t\}$ and identifying points in $\cL_1(\vep,q)$ with the corresponding points in $\cL_1$ we define
		\begin{align*}
			s=-e_1 \quad and \quad  t=e_1,\\
			a=-\frac12 e_1 \quad  and \quad  b=\frac12 e_1,\\
			m_1= \vep e_2 \quad and \quad m_2=-\vep e_2.
		\end{align*}
		Observe that the vectors are in $\ell_p^2$ and a straightforward verification shows that conditions $(c_1)-(c_5)$ are verified. Assume now that $\cL_{k}(\vep,q)$ has been constructed in $\ell_q^{k+1}$. Recall that $\sL_{k+1}$ is contructed by replacing every edge in $\sL_k$ with a copy of $\sL_1$. For every edge $\{s,t\}$ in $\sL_k$ we introduce $4$ new points as follows:
		\begin{align*}
			a=\frac34 s+\frac14 t  \quad  and \quad  b=\frac14 s+\frac34 t,\\
			m_1= \frac{s+t}{2}+\frac{\vep}{2}\norm{s-t}_q e_{k+2} \quad and \quad m_2=\frac{s+t}{2}-\frac{\vep}{2}\norm{s-t}_q e_{k+2}.
		\end{align*}
		Then 
		\begin{align*}
			\norm{b-m_2}_q & = & \bnorm{\frac14 s+\frac34 t-\frac{s+t}{2}+\frac{\vep}{2}\norm{s-t}_q e_{k+2}}_q =  \bnorm{\frac{t-s}{4}+\frac{\vep}{2}\norm{s-t}_q e_{k+2}}_q\\
			& = &\Big(\frac{1}{4^q}\norm{s-t}_q^q+\frac{\vep^q}{2^q}\norm{s-t}_q^q\Big)^{1/q} = \frac{\norm{s-t}_q}{4}\Big(1+(2\vep)^q\Big)^{1/q},
		\end{align*}
		where in the penultimate equality we used the fact that $\frac14(s-t)\in \ell_q^{k+1}$. The other equalities can be checked similarly.
	\end{proof}
	
	\begin{rema}
		It was proved in \cite{BGN_SIAM15} that an $(\vep,p)$-thin $k$-Laakso substructure is $2^{32}$-doubling whenever $\vep<\frac{2}{17}$.
	\end{rema}
	
	\subsection{A proof via a self-improvement argument}
	\label{sec:self-improv}
	
	In this section we prove Theorem \ref{theo:dimreduction} using a self-improvement argument.  Recall that a Banach space $\banX$ is \emph{uniformly convex} if for all $t>0$ there exists $\delta(t)>0$ such that for all $x,y\in S_X$, if $\norm{x-y}_\banX\ge t$ then $\norm{\frac{x+y}{2}}_\banX\le 1-\delta(t)$. The modulus of uniform convexity of $\banX$, denoted $\delta_{\banX}$, is defined by 
	\begin{equation}
		\delta_{\banX}(t)=\inf\left\{1-\left\|\frac{x+y}{2}\right\|_\banX\colon \|x-y\|_\banX\ge t\right\}.
	\end{equation}
	Clearly, $\banX$ is uniformly convex if and only if $\delta_{\banX}(t)>0$ for all $t>0$, and we say that $\banX$ is $q$-uniformly convex (or is uniformly convex of power type $q$) if $\delta_\banX(t)\ge ct^{q}$ for some universal constant $c>0$. A classical result of Pisier \cite{Pisier75} states that a uniformly convex Banach space admits a renorming that is $q$-uniformly convex for some $q\ge 2$. The following key lemma is similar to a contraction result for Laakso graphs from \cite{JS09}.
	
	\begin{lemm}\label{lemm:contraction_uc}
		Let $p\in(1,\infty)$. Assume that $\cL_k (\vep,p)$ is a $(\vep,p)$-thin  $k$-Laakso substructure in $\metXd$ and that $f\colon \metX \to \banYn$ is a bi-Lipschitz embedding with distortion $D$. Then for every $1\le \ell \le k$, if $\{s,a,m_1,m_2,b, t\}\subset \sL_k(\vep,p)$ is indexed by a copy of one of the Laakso graphs $\sL_1$ created at step $\ell$, we have:
		\begin{equation}
			\|f(s)-f(t)\|\le D\dX(s,t)(1+\vep^p)^{1/p}\left(1-\delta_\banY\left(\frac{2\vep}{D(1+\vep^p)^{1/p}}\right)\right).
		\end{equation} 
	\end{lemm}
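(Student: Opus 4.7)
The plan is to exploit the presence of the two "approximate midpoints" $f(m_1)$ and $f(m_2)$ of $f(s)$ and $f(t)$, together with the uniform convexity of $\banY$, by applying the modulus of convexity to suitable pairs of vectors whose sum equals $f(s)-f(t)$. First, I would normalize: rescale $f$ so that $\dX(x,y)\le \|f(x)-f(y)\|\le D\,\dX(x,y)$ for all $x,y\in\metX$ (this is possible without affecting the inequality to be proved, which is scale-invariant in the codomain). Set $\Delta\eqd\dX(s,t)$.

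Next, for $i=1,2$, I would introduce the auxiliary vectors
\begin{equation*}
u_i \eqd f(s)-f(m_i),\qquad v_i\eqd f(m_i)-f(t),
\end{equation*}
so that $u_i+v_i=f(s)-f(t)$ for both $i$. From condition $(c_4)$ and the upper bound on $f$, both $\|u_i\|$ and $\|v_i\|$ are at most $R\eqd \tfrac{D}{2}(1+\vep^p)^{1/p}\Delta$. From condition $(c_5)$ and the lower bound on $f$, I get
\begin{equation*}
\|u_1-u_2\|=\|v_2-v_1\|=\|f(m_1)-f(m_2)\|\ge \vep\Delta.
\end{equation*}

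Now I apply the standard "ball" version of the modulus of convexity: if $\|x\|,\|y\|\le R$ and $\|x-y\|\ge \tau$, then $\|\tfrac{x+y}{2}\|\le R\bigl(1-\delta_\banY(\tau/R)\bigr)$. Applied to $(u_1,u_2)$ and $(v_1,v_2)$ with $\tau=\vep\Delta$, this yields
\begin{equation*}
\bnorm{\tfrac{u_1+u_2}{2}}\le R\Bigl(1-\delta_\banY\bigl(\tfrac{2\vep}{D(1+\vep^p)^{1/p}}\bigr)\Bigr),
\end{equation*}
and likewise for the $v_i$'s. Finally, writing
\begin{equation*}
f(s)-f(t)=\tfrac{1}{2}\bigl((u_1+v_1)+(u_2+v_2)\bigr)=\tfrac{u_1+u_2}{2}+\tfrac{v_1+v_2}{2}
\end{equation*}
and invoking the triangle inequality gives the desired estimate, since $2R=D(1+\vep^p)^{1/p}\Delta$.

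The main obstacle is a minor bookkeeping one: matching the scale at which the modulus of convexity is evaluated to the argument $\tfrac{2\vep}{D(1+\vep^p)^{1/p}}$ that appears in the statement, which amounts to computing $\vep\Delta/R$ correctly. The geometric content reduces to the following two-dimensional picture: the quadruple $\{u_1,u_2,-v_1,-v_2\}$ lies in a ball of radius $R$, both pairs $(u_1,u_2)$ and $(-v_1,-v_2)$ are well-separated, and the vector $f(s)-f(t)$ is expressible as the sum of the two midpoints, each of which is pushed inward by uniform convexity.
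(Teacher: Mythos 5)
Your proposal is correct and takes essentially the same route as the paper: the paper sets $x_i=f(m_i)-f(s)$, $y_i=f(t)-f(m_i)$ (the negatives of your $u_i,v_i$), normalizes by $D\alpha$ with $\alpha=\tfrac{\dX(s,t)}{2}(1+\vep^p)^{1/p}$ (your $R=D\alpha$), applies the same ball-form of uniform convexity to the pairs $(x_1,x_2)$ and $(y_1,y_2)$, and concludes via the decomposition $f(t)-f(s)=\tfrac{1}{2}(x_1+x_2+y_1+y_2)$ and the triangle inequality. The bookkeeping you flagged works out exactly as you describe, since $\vep\Delta/R=\tfrac{2\vep}{D(1+\vep^p)^{1/p}}$.
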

	
	\begin{proof}
		Assume without loss of generality that for all $x,y\in \metX$ 
		\begin{equation}\label{eq:Lip}
			\dX(x,y)\le \norm{f(x)-f(y)}\le D\dX(x,y).
		\end{equation} 
		Let $\alpha \eqd \frac{\dX(s,t)}{2}(1+\vep^p)^{1/p}$, and  $$x_1 \eqd f(m_1)-f(s), \quad x_2 \eqd f(m_2)-f(s), \quad y_1 \eqd f(t)-f(m_1), \quad y_2 \eqd f(t)-f(m_2).$$ For all $i\in\{1,2\}$, it follows from the upper bound in \eqref{eq:Lip} and $(c_4)$ that $\frac{\norm{x_i}}{D\alpha}\le 1$ and $\frac{\norm{y_i}}{D\alpha} \le 1$. On the other hand, it follows from the lower bound in \eqref{eq:Lip} and $(c_5)$ that 
		$$\frac{\norm{x_1-x_2}}{D\alpha}\ge \frac{2\vep}{D(1+\vep^p)^{1/p}} \quad \textrm{ and } \quad  \frac{\norm{y_1-y_2}}{D\alpha}\ge \frac{2\vep}{D(1+\vep^p)^{1/p}}.$$
		
		Therefore  
		
		$$\bnorm{\frac{x_1+x_2}{2D\alpha}}\le 1-\delta_\banY\Big(\frac{2\vep}{D(1+\vep^p)^{1/p}}\Big)\quad \textrm{ and } \quad \bnorm{\frac{y_1+y_2}{2D\alpha}}\le1-\delta_\banY\Big(\frac{2\vep}{D(1+\vep^p)^{1/p}}\Big).$$ 
		
		Since 
		
		\begin{align*}
			f(t)-f(s) & = (f(t)-f(m_1)+f(m_1)-f(s)+f(t)-f(m_2)+f(m_2)-f(s))/2\\
			& = (y_1+y_2+x_1+x_2)/2,
		\end{align*}
		
		it follows from the triangle inequality that 
		$\|\frac{f(t)-f(s)}{D\alpha}\|\le2\Big(1-\delta_\banY\Big(\frac{2\vep}{D(1+\vep^p)^{1/p}}\Big)\Big)$ and the conclusion follows.
	\end{proof}
	
	By using the tension between the thinness parameter of a thin Laakso substructure and the power type of the modulus of uniform convexity of the host space we can prove the following distortion lower bound.
	
	\begin{theo}\label{theo:main1}
		Let $2\le p<q$ and assume that $\metXd$ admits a bi-Lipschitz embedding with distortion $D$ into a $p$-uniformly convex Banach space $\banY$. There exists $\vep:=\vep(p,q,D,\banY)>0$ such that if $\metXd$ admits a $(\vep,q)$-thin $k$-Laakso substructure then $D=\Omega(k^{1/p-1/q})$
	\end{theo}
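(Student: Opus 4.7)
The plan is to iterate Lemma~\ref{lemm:contraction_uc} through the levels of the thin Laakso substructure in a self-improving fashion. After normalization, assume $\dX(x,y)\le\|f(x)-f(y)\|\le D\dX(x,y)$, and for each $\ell\in\{1,\dots,k\}$ set
$$
\Phi_\ell:=\sup\Bigl\{\tfrac{\|f(s)-f(t)\|}{\dX(s,t)}:(s,t)\text{ are endpoints of some level-}\ell\text{ copy of }\sL_1\Bigr\}.
$$
The Lipschitz bounds give $1\le\Phi_\ell\le D$ for every $\ell$, and the goal is to bootstrap the contraction of Lemma~\ref{lemm:contraction_uc} so that $\Phi_\ell$ shrinks sufficiently as $\ell$ decreases, producing a quantitative contradiction unless $D$ is large in terms of $k$.

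The key self-improvement step replaces the global Lipschitz constant $D$ in the proof of Lemma~\ref{lemm:contraction_uc} by the tighter level-dependent bound $\Phi_{\ell+1}$. Concretely, fix $\ell<k$ and a level-$\ell$ copy $(s,a,m_1,m_2,b,t)$; the pairs $(s,a),(a,m_i),(m_i,b),(b,t)$ are all endpoints of level-$(\ell+1)$ copies, and routing through the corner $a$ (and symmetrically through $b$) gives the improved upper bound
$$
\|f(s)-f(m_i)\|\le\Phi_{\ell+1}\bigl(\dX(s,a)+\dX(a,m_i)\bigr)=\frac{\Phi_{\ell+1}\bigl(1+(1+(2\vep)^q)^{1/q}\bigr)}{4}\dX(s,t),
$$
and analogously for $\|f(t)-f(m_i)\|$. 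Substituting these into the two-fold convexity argument driving the proof of Lemma~\ref{lemm:contraction_uc}, in place of the crude Lipschitz bound $\|f(s)-f(m_i)\|\le D\dX(s,m_i)$, yields the self-improvement recursion
$$
\Phi_\ell\le\Phi_{\ell+1}\,B_\vep\,\Bigl(1-\delta_\banY\Bigl(\tfrac{2\vep}{\Phi_{\ell+1}B_\vep}\Bigr)\Bigr),\qquad B_\vep:=\frac{1+(1+(2\vep)^q)^{1/q}}{2}=1+O(\vep^q).
$$

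Finally I invoke $p$-uniform convexity $\delta_\banY(t)\ge K_\banY t^p$ and raise the recursion to the $p$-th power; using $(1-y)^p\le 1-y$ for $p\ge 1,\ y\in[0,1]$ together with $B_\vep^p=1+O(\vep^q)$ produces the linearized telescoping inequality
$$
\Phi_\ell^p\le\Phi_{\ell+1}^p(1+C_1\vep^q)-C_2\vep^p,
$$
for constants $C_1,C_2>0$ depending only on $p,q,\banY$. Iterating from $\ell=k$ down to $\ell=1$, using $\Phi_1\ge 1$ and $\Phi_k\le D$, and choosing $\vep=\vep(p,q,D,\banY)$ of order $D^{-p/(q-p)}$ --- which keeps the multiplicative factor $(1+C_1\vep^q)^{k-1}$ bounded throughout the relevant regime $k=O(D^{pq/(q-p)})$ while making $C_2\vep^p\asymp D^{-p^2/(q-p)}$ --- yields $k\lesssim D^{pq/(q-p)}$, equivalently $D=\Omega(k^{1/p-1/q})$, with implicit constants depending only on $p,q,\banY$. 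The main technical subtlety is the tight balance between the multiplicative overhead $B_\vep=1+O(\vep^q)$ accrued each time one routes through a corner and the additive convexity gain $C_2\vep^p$: passing to $p$-th powers rather than iterating the multiplicative recursion is what avoids a spurious $\log D$ factor and produces the sharp exponent $1/p-1/q$.
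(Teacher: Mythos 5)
Your argument is correct and follows essentially the same self-improvement strategy as the paper: both apply Lemma~\ref{lemm:contraction_uc} level by level, route through the corner $a$ (and $b$), pick $\vep\asymp D^{-p/(q-p)}$, and telescope. The bookkeeping differs slightly -- the paper's $D_j$ tracks the Lipschitz constant on the pairs $\{s,m_i\}$, $\{m_i,t\}$ arising from the decomposition $\sL_{k-j}\oslash\sL_j$ (exactly the pairs used inside the contraction lemma) and linearizes the recursion $D_j\le D_{j-1}(1+2^{q-1}\vep^q)\bigl(1-c\vep^p/D_{j-1}^p\bigr)$ by bounding $D_{j-1}\le D$, whereas you track endpoint pairs $\{s,t\}$ and pass to $p$-th powers -- but both routes give the sharp exponent, so your remark that passing to $p$-th powers ``is what avoids a spurious $\log D$ factor'' is not accurate. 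Lastly, your closing step compounds the multiplicative factor $(1+C_1\vep^q)^{k-1}$ and then appeals to the ``relevant regime'' $k=O(D^{pq/(q-p)})$, which as phrased reads as circular (it can be rescued by truncating to $k'=\Theta(D^{pq/(q-p)})$ levels); the cleaner fix, mirroring the paper, is to bound $\Phi_{\ell+1}^p\le D^p$ inside the recursion, obtain the additive gap $\Phi_{\ell+1}^p-\Phi_\ell^p\ge C_2\vep^p-C_1\vep^qD^p$, note that both terms are of order $D^{-p^2/(q-p)}$ once $\vep\asymp D^{-p/(q-p)}$, and sum over $\ell$ to get $D^p\gtrsim (k-1)D^{-p^2/(q-p)}$.
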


	\begin{proof}
		Assume that for all $x,y\in \metX$ 
		\begin{equation}\label{eq:}
			\dX(x,y)\le \norm{f(x)-f(y)}\le D\dX(x,y),
		\end{equation} 
		and let $\cL_k(\vep,q)$ be a $(\vep,q)$-thin $k$-Laakso substructure with $\vep>0$  small enough such that $(1+\vep^q)^{1/q}\le 2$. The self-improvement argument uses the self-similar structure of the Laakso graphs. For $1\le j\le k$ consider the decomposition $\sL_{k-j}\oslash \sL_j$ of $\sL_k$, i.e. $\sL_k$ is formed by replacing each of the $6^{k-j}$ edges of $\sL_{k-j}$ by a copy of $\sL_j$. We define $D_j$ to be the smallest constant such that 
		\begin{equation}
			\norm{f(x)-f(y)}\le D_j \dX(x,y),
		\end{equation} 
		for all $4\times 6^{k-j}$ pairs of points $\{x,y\}$ in $\cL_k(\vep,p)$ that are indexed by vertices of a copy of $\sL_j$ in $\sL_k$ of the form $\{s,m_i\}$ or $\{m_i,t\}$ for some $i\in\{1,2\}$, where $s$ and $t$ are the farther apart vertices in $\sL_j$ whose two distinct midpoints are $m_1$ and $m_2$. 
		
		It is clear that for all $j\in\{1,\dots,k\}$, the inequalities $1\le D_j\le D$ hold. Assume that $\delta_\banY(t)\ge ct^{p}$ for some constant $c>0$ (that depends on $\banY$ only). Fix $\sL^0_j$ as one of the $6^{k-j}$ copies of $\sL_j$ in the decomposition $\sL_{k-j}\oslash\sL_j$ of $\sL_k$. Observe that $\sL^0_j=\sL_1\oslash \sL_{j-1}$ and let $\{s,a,m_1,m_2,b,t\}$ denote the vertices of $\sL_1$ in this decomposition of $\sL_j^0$. Consider the pair $\{s, m_1\}$ as defined above (the 3 other pairs can be treated similarly) and the two copies of $\sL_{j-1}$ which contain either $s$ or $m_1$ and have the vertex $a$ in common. In the proof of Lemma \ref{lemm:contraction_uc} we only used the upper bound in $\eqref{eq:Lip}$ for pairs of points of the form described in the definition of $D_{j-1}$, and because we assumed that $\delta_\banY(t)\ge ct^{p}$ and $(1+\vep^q)^{1/q}\le 2$ we have 
		
		\begin{equation}
			\|f(s)-f(a)\|\le D_{j-1}\dX(s,a)(1+\vep^q)^{1/q}\Big(1-\frac{c\vep^p}{D_{j-1}^p}\Big),
		\end{equation} 
		and
		\begin{equation}
			\|f(a)-f(m_1)\|\le D_{j-1}\dX(a,m_1)(1+\vep^q)^{1/q}\Big(1-\frac{c\vep^p}{D_{j-1}^p}\Big).
		\end{equation}
		
		Then, it follows from the triangle inequality that
		
		\begin{equation}\label{eq2:}
			\|f(s)-f(m_1)\|\le D_{j-1}(\dX(s,a)+\dX(a,m_1))(1+\vep^q)^{1/q}\Big(1-\frac{c\vep^p}{D_{j-1}^p}\Big).
		\end{equation}
		
		By $(c_1)$ and $(c_3)$ in the construction of the thin Laakso substructures we have 
		$$\dX(s,a)+\dX(a,m_1)=\frac14 \dX(s,t)+\frac14(1+(2\vep)^q)^{1/q}\dX(s,t).$$
		
		Since $q\ge 1$ we have $(1+(2\vep)^q)^{1/q}\le 1+(2\vep)^q$, and thus
		
		$$\dX(s,a)+\dX(a,m_1)\le \frac14 \dX(s,t)(2+(2\vep)^q)=\frac12 \dX(s,t)(1+2^{q-1}\vep^q)\stackrel{(c_4)}{=}\frac{1+2^{q-1}\vep^q}{(1+\vep^q)^{1/q}}\dX(s,m_1).$$
		
		Substituting this last inequality in \eqref{eq2:} we obtain
		
		\begin{equation*}
			\|f(s)-f(m_1)\|  \le  D_{j-1}\dX(s,m_1)(1+2^{q-1}\vep^q)\Big(1-\frac{c\vep^p}{D_{j-1}^p}\Big).
		\end{equation*}

		By symmetry of the $(\vep,q)$-thin Laakso substructures, the other pairs of points in the definition of $D_j$ can be treated similarly and hence we have proved that 
		
		\begin{equation*}\label{eq:induc}
			D_{j}\le D_{j-1}(1+2^{q-1}\vep^q)\Big(1-\frac{c\vep^p}{D_{j-1}^p}\Big).
		\end{equation*}
		
		Then,
		
		\begin{align}
			\nonumber D_{j} & \le  D_{j-1}(1+2^{q-1}\vep^q)-\frac{(1+2^{q-1}\vep^q)c\vep^{p}}{D_{j-1}^{p-1}}\\
			\label{eq:10}	   & \le  D_{j-1} +D(2\vep)^q-\frac{c\vep^{p}}{D^{p-1}},
		\end{align}
		where in \eqref{eq:10} we used the fact that $D_{j-1}\le D$ and $1+2^{q-1}\vep^q\ge 1$.
		Rearranging we have
		\begin{equation}
			D_{j-1}-D_{j} \ge D\left(\frac{c\vep^p}{D^p}-(2\vep)^q\right),
		\end{equation}
		If we let $\vep=\gamma D^{-\frac{p}{q-p}}$ for some small enough $\gamma$ to be chosen later (and that depends only on $p,q$, and $c$), then 
		\begin{align}
			D_{j-1}-D_{j} & \ge  D\left(c\gamma^p\frac{D^{-\frac{p^2}{q-p}}}{D^p}-(2\gamma)^q D^{-\frac{pq}{q-p}}\right)\\
			& \ge  D\cdot D^{-\frac{pq}{q-p}}(c\gamma^p-2^q\gamma^q).
		\end{align}
		If we choose $\gamma\in\Big(0,\Big(\frac{c}{2^{q+1}}\Big)^{1/(q-p)}\Big)$, and since $p<q$, we have $c\gamma^p-2^q\gamma^q\ge \frac c2\gamma^p>0$. Hence $D_{j-1}-D_{j} \ge \frac{c\gamma^p}{2}D^{1-\frac{pq}{q-p}}$ and summing over $j=2,\dots, k$ we get 
		\begin{equation}
			D\ge D_1-D_k\ge \sum_{j=2}^{k}\frac{c\gamma^p}{2}D^{1-\frac{pq}{q-p}}\ge \frac{c\gamma^p}{2}(k-1)D^{1-\frac{pq}{q-p}},
		\end{equation}
		and hence
		$D\gtrsim k^{1/p-1/q}$.
	\end{proof}
	
	Corollary \ref{cor:BSS} below improves Theorem \ref{theo:BGN} in several ways. The dependence in the dimension for the thinness parameter is removed. Assertion \ref{stat:BSS1} extends to all $p$-uniformly convex Banach spaces the bound in assertion \ref{stat:BGN2} of Theorem \ref{theo:BGN} while improving the bound. Indeed if $\banY=\ell_p^d$ then $D=\Omega\Big(\big(\frac{\log n}{d}\big)^{\min\big\{\frac12,\frac1p\big\}-\frac1q}\Big)$.
	
	\begin{coro}\label{cor:BSS}
		Let $q\in(2,\infty)$, $\banY$ be a Banach space, and fix $D\ge 1$. For every $n\in\bN$ there exists an $n$-point subset $\cL_n(q,D,\banY)$ of $\ell_q$ that is $2^{32}$-doubling and such that any bi-Lipschitz embedding with distortion $D$ into $\banY$ must incur
		\begin{enumerate}
			\item\label{stat:BSS1} $D=\Omega \Big((\log n)^{\frac1p-\frac1q}\Big)$ if $p\in[2,q)$  and $\banY$ is a $p$-uniformly convex Banach space
			\item[] and
			\item\label{stat:BSS2} $D=\Omega \Big(\Big(\frac{\log n}{d}\Big)^{\frac12-\frac1q}\Big)$ if $\banY=\ell_q^d$
		\end{enumerate}
	\end{coro}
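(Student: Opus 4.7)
The plan is to take $\cL_n(q,D,\banY)$ to be a thin Laakso substructure in $\ell_q$ with carefully tuned depth and thinness, and to read off both lower bounds from Theorem~\ref{theo:main1}; the second assertion then follows via an elementary Banach--Mazur reduction.

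First I would fix $n\in\bN$ and pick the depth $k=\Theta(\log n)$ so that, by Lemma~\ref{lemm:thinLaakso}, the space $\ell_q^{k+1}\subset\ell_q$ contains a $(\vep,q)$-thin $k$-Laakso substructure $\cL_k(\vep,q)$ of cardinality $\Theta(6^k)=\Theta(n)$ for any $\vep>0$; extracting a subset of exactly $n$ points alters neither the doubling constant nor the distortion lower bound up to universal factors. For assertion~\eqref{stat:BSS1}, I would set $\vep:=\gamma D^{-p/(q-p)}$ with $\gamma=\gamma(p,q,\banY)>0$ small enough that (i) the self-improvement induction in the proof of Theorem~\ref{theo:main1} goes through (it uses only the estimate $\delta_{\banY}(t)\ge c\,t^p$ for some $c=c(\banY)>0$), and (ii) $\vep<2/17$. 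Since $D\ge 1$ implies $\vep\le\gamma$, condition (ii) is automatic once we additionally require $\gamma<2/17$, and the substructure is then $2^{32}$-doubling by the remark following Lemma~\ref{lemm:thinLaakso}. Taking $\cL_n(q,D,\banY)$ to be this substructure, Theorem~\ref{theo:main1} delivers $D=\Omega(k^{1/p-1/q})=\Omega((\log n)^{1/p-1/q})$.

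Assertion~\eqref{stat:BSS2} does not fall under Theorem~\ref{theo:main1} directly, because $\ell_q^d$ is not $p$-uniformly convex for any $p\in[2,q)$ with a constant independent of $d$. Instead I would combine assertion~\eqref{stat:BSS1} with the standard norm comparison $\|x\|_q\le\|x\|_2\le d^{\frac12-\frac1q}\|x\|_q$ on $\bR^d$ (valid for $q>2$): any bi-Lipschitz embedding $f\colon X\to \ell_q^d$ with distortion $D$, viewed through the formal identity $\ell_q^d\to\ell_2^d$, becomes a bi-Lipschitz embedding into $\ell_2^d$ of distortion at most $D':=D\cdot d^{\frac12-\frac1q}$. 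Choosing $X:=\cL_n(q,D',\ell_2^d)$ from assertion~\eqref{stat:BSS1} with $p=2$ and $\banY=\ell_2^d$ (which is $2$-uniformly convex with an absolute constant), one obtains $D'=\Omega((\log n)^{\frac12-\frac1q})$, and rearranging gives $D=\Omega((\log n/d)^{\frac12-\frac1q})$; this subset still sits inside $\ell_q^{k+1}\subset\ell_q$.

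The only real obstacle is bookkeeping: one must check that the $\vep$ dictated by Theorem~\ref{theo:main1} (a negative power of $D$, or of the effective distortion $D'$ in the second part) still satisfies the doubling threshold $\vep<2/17$, which follows automatically from $D\ge 1$ and a universally small choice of $\gamma$. Beyond this and the elementary norm comparison above, no new analytic input is required.
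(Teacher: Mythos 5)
Your proposal is correct and follows essentially the same route as the paper: assertion~\eqref{stat:BSS1} is read off Theorem~\ref{theo:main1} applied to the thin Laakso substructure from Lemma~\ref{lemm:thinLaakso} (with $\vep$ tuned to satisfy both the self-improvement induction and the doubling threshold $\vep<2/17$), and assertion~\eqref{stat:BSS2} is reduced to the $p=2$ case via the Banach--Mazur estimate $d(\ell_q^d,\ell_2^d)\le d^{\frac12-\frac1q}$. You merely spell out the bookkeeping (choice of $k$, choice of $\vep$, the effective distortion $D'=D\,d^{1/2-1/q}$) that the paper leaves implicit.
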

	
	\begin{proof}
		Assertion $(1)$ follows immediately from Theorem \ref{theo:main1} and Lemma \ref{lemm:thinLaakso}. The second assertion follows from the fact that $n=\Theta(6^k)$ and that the Banach-Mazur distance between the $2$-uniformly convex spaces $\ell_2^d$ and $\ell_q^d$ is at most $d^{1/2-1/q}$.
	\end{proof}

	\begin{rema} Very recently, Naor and Young \cite{NaorYoung20} gave the first partial counter-example to the metric Kadec-Pe\l czy\'{n}ski problem, which asks whether for $1\le p<r<q<\infty$, a metric space that admits a bi-Lipschitz embedding into $L_p$ and into $L_q$ necessarily admits a bi-Lipschitz embedding into $L_r$. Naor and Young produced a Heisenberg-type space that does embed into $\ell_1$ and into $\ell_q$ but does not embed into $\ell_ r$ for any $1<r<4\le q$. The fact that what happens for $\ell_r$ in the range $4\le r<q$ is not understood seems inherent of the Heisenberg approach. If we could show that the thin Laakso substructures do embed into $\ell_1$ then we would have a second counter-example to the metric Kadec-Pe\l czy\'{n}ski problem which resolves this issue.
	\end{rema}
	
	It remains to show how Theorem \ref{theo:dimreduction} can be derived from Corollary \ref{cor:BSS}. First observe that for all $q>2$, $D\ge 1$, and every $n\in\bN$ the $n$-point doubling subsets $\cL_n(q,D,\ell_2)$ of $\ell_q$ belong to the unit ball of $\ell_q$. Now consider the subset $Z_q\eqd\bigcup_{(k,n)\in\bN^2} \cL_n(q,k,\ell_2)\times \{(4^{k},4^{n})\}\subset \ell_q\oplus_q \bR^2\equiv \ell_q$. Clearly, $Z_q$ contains an isometric copy of $\cL_n(q,k,\ell_2)$ and it can be verified that $Z_q$ is doubling. If $Z_q\subset \ell_q$ admits a bi-Lipschitz embedding with distortion $D$ into $\ell_q^d$ for some $d\in\bN$, then  
	the proof of assertion \eqref{stat:BSS2} in Corollary \ref{cor:BSS} shows that $D=\Omega \Big(\Big(\frac{\log n}{d}\Big)^{\frac12-\frac1q}\Big)$
	since $Z_q$ contains an isometric copy of $\cL_n(q,k,\ell_2)$ for all $n\in \bN$, where $k\in\bN$ is such that $k\le D< k+1$, and hence $D$ cannot be finite.
	
	\subsection{Quantitative embeddability of $\ell_q^k$ into uniformly convex Banach spaces}
	\label{sec:lp-uc}
	
	The alternative proof of Theorem \ref{theo:dimreduction} that we proposed has several noteworthy applications. One application concerns the non-embeddability of $\ell_q$ into $L_p$ and more generally lower bounding the quantitative parameter $\cdist{\banY}(\ell_q^k)$ whenever $\banY$ is a $p$-uniformly convex Banach space. It is well known that when $2\le p<q$, if $\banY$ has cotype $p$, and in particular if $\banY$ is $p$-uniformly convex, then $\sup_{k\ge 1}\cdist{\banY}(\ell_q^k)=\infty$. Quantitatively,
	\begin{equation}\label{eq:lq-lp}
		\cdist{\banY}(\ell_q^k)=\Omega\Big(k^{\frac1p-\frac1q}\Big) \text{ for all }2\le p<q.
	\end{equation}
	and 
	\begin{equation}\label{eq:lq-lp<2}
		\cdist{\banY}(\ell_q^k)=\Omega\Big(k^{\frac12-\frac1q}\Big) \text{ for all }1\le p\le 2<q.
	\end{equation}
	The fact that these lower bounds are tight follows from simple estimates of the norm of the formal identity (and its inverse) (e.g. $\|I_{\ell_q^k\to\ell_p^k}\|\cdot \|I^{-1}_{\ell_q^k\to\ell_p^k}\|\le k^{1/p-1/q}$ is a consequence of H\"older's inequality and the monotonicity of the $\ell_r$-norms). Since the thin $k$-Laakso substructure lives in $\ell_q^{k+1}$ and has  $\Theta(6^k)$ points these lower bounds follow directly from the first assertion of Corollary \ref{cor:BSS}. As we point out below the approaches in \cite{LafforgueNaor14_GD} and \cite{BGN_SIAM15} seem to only give suboptimal results. 
	
	The $n$-point doubling subset $\cH_n(q)$ of Lafforgue and Naor lies in some $L_q(\bR^k)$-space and hence in $\ell_q^{n(n-1)/2}$ by a result of \cite{Ball90}. Therefore, if one uses the Heisenberg-type $\sqrt{n}$-point doubling subset of $\ell_q^n$ one can derive that, for example, $\cdist{\ell_2}(\ell_q^n)=\Omega\Big(\log (n)^{\frac12-\frac1q}\Big)$ which is suboptimal. To get the optimal lower bound one would need to be able to show that the doubling subset can actually be embedded into $\ell_q^{\Theta(\log n)}$, which is the best we can hope for due to assertion \ref{stat:LN1} in Theorem \ref{theo:LN}. This does not seem to be known and we do not know if this is true.
	
	Following Bartal-Gottlieb-Neiman's approach, one could obtain partial and suboptimal results as follows. Assume that $\ell_q^k$ admits a bi-Lipschitz embedding into $\ell_p^k$. Then one can construct a subset in $\ell_q^{k}$, namely $\cL_n(q,D,k-1)$, having $n=\Theta(6^k)$ points and witnessing the fact that $D$ must be large. The estimates in Theorem \ref{theo:BGN} yield  $\cdist{\ell_p^k}(\ell_q^k)=\Omega\Big(k^{\frac1p-\frac1q}\Big)$ for all $2\le p<q$. Therefore, the right order of magnitude is captured in the range $2\le p<q$ but in the (very) restricted case of a finite-dimensional $\ell_p$ target that has the same dimension as the source space. In the range $1<p<2<q$ one gets $\cdist{\ell_p^k}(\ell_q^k)=\Omega\Big(k^{1-\frac1p-\frac1q}\Big)$ which does not capture the right order of magnitude and is clearly suboptimal. 
	
	\subsection{Quantitative embeddability of $\ell_q^k$ into non-positively curved spaces} 
	\label{sec:npc}
	
	Another advantage of the proof via self-improvement is that it can be extended, with a little bit more care, to cover maps taking values into non-positively curved spaces, and more generally to the context of rounded ball metric spaces.
	
	Recall that the $\eta$-approximate midpoint set of  $x,y\in \metXd$ is defined as
	\begin{align*}
		\text{Mid}(x,y,\eta) & \eqd  \left\{z\in \metX\colon \max\big\{\dX(x,z),\dX(y,z)\big\}\le \frac{1+\eta}{2}\dX(x,y)\right\}\\
		& =   B_\metX\left(x,\frac{1+\eta}{2}\dX(x,y)\right)\cap B_\metX\left(y,\frac{1+\eta}{2}\dX(x,y)\right)
	\end{align*}
	
	As usual for an arbitrary set $A\subset \metX$, $\diam(A)\eqd\sup\big\{\dX(x,y)\colon x,y\in A\big\}.$ The following definition is due to T. J. Laakso \cite{Laakso02}.
	
	A metric space $\metXd$ is a \emph{rounded ball space} if for all $t>0$ there exists $\eta(t)>0$ such that for all $x,y\in \metX$ 
	\begin{equation}\label{eq:rbinequality}
		\diam\big(\mathrm{Mid}(x,y,\eta(t))\big)<t\cdot \dX(x,y).
	\end{equation}
	
	\begin{rema}\label{rema:rb_rem}
		Note that for all $x,y\in \metX$ and $\eta>0$, $\diam(\mathrm{Mid}(x,y,\eta))\le (1+\eta)\dX(x,y)$ always holds. Therefore the rounded ball property is non-trivial only for $t\in(0,1]$ and in this case $\eta\in(0,1)$ necessarily. 
	\end{rema}
	Note that a Banach space is a rounded ball space if and only if it is uniformly convex \cite[Lemma 5.2]{Laakso02}. We can define a \emph{rounded ball modulus} $\eta_\metX$ as follows
	\begin{equation}
		\eta_\metX(t)\eqd \sup\{\eta(t)\colon \eqref{eq:rbinequality} \text{ holds for all } x,y \in \metX\}.
	\end{equation}
	We will say that $\metXd$ is a rounded space with power type $p$ if $\eta_\metX(t)\ge ct^p$. 
	
	The following contraction lemma is an extension, to the purely metric context of rounded ball spaces, of the contraction phenomenon in Lemma \ref{lemm:contraction_uc}.
	
	\begin{lemm}\label{lemm:contraction_rb}
		Let $\metYd$ be a metric space and $\vep>0$ such that $(1+\vep^q)^{1/q}\le 2$. Assume that $\cL_k(\vep,q)$ is a $(\vep,q)$-thin $k$-Laakso substructure in $\metXd$ and that $f\colon \metX \to \metY$ satisfies
		\begin{equation}\label{eq:bilip}
			\frac1A \dX(x,y)\le \dY(f(x),f(y))\le B \dX(x,y),
		\end{equation}
		for some constants $A,B>0$. Then for every $1\le \ell \le k$, if $\{s,a,m_1,m_2,b, t\}\subset \sL_k(\vep,q)$ is indexed by a copy of one of the Laakso graphs $\sL_1$ created at step $\ell$ we have:
		\begin{equation}
			\dY(f(s),f(t))\le B \dX(s,t)(1+\vep^q)^{1/q}\Big(1-\frac{1}{2}\eta_\metY(\vep/2AB)\Big).
		\end{equation} 
	\end{lemm}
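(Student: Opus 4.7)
The plan is to adapt the contraction mechanism of Lemma \ref{lemm:contraction_uc} to the purely metric setting, replacing the vector-space averaging identity $f(t)-f(s)=\tfrac12\sum_i((f(m_i)-f(s))+(f(t)-f(m_i)))$ with an argument based on approximate midpoint sets. The key observation is that the bi-Lipschitz upper bound and $(c_4)$ force both $f(m_1)$ and $f(m_2)$ to lie in a common approximate midpoint set of $f(s)$ and $f(t)$, while the bi-Lipschitz lower bound and $(c_5)$ force them to be separated. The rounded ball property will then compress $\dY(f(s),f(t))$ by the required amount.

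Concretely, I would set $R:=\dY(f(s),f(t))$ and $\rho:=B\,\dX(s,t)(1+\vep^q)^{1/q}$. From $(c_4)$ and \eqref{eq:bilip}, one gets $\dY(f(s),f(m_i)),\dY(f(t),f(m_i))\le \rho/2$ for $i=1,2$, which already yields $R\le \rho$ by the triangle inequality. Assuming $R>0$ and defining $\eta^*:=\rho/R-1\ge 0$, the same inequalities read exactly as the membership $f(m_1),f(m_2)\in \mathrm{Mid}(f(s),f(t),\eta^*)$, since $\rho/2=\tfrac{1+\eta^*}{2}R$. On the other hand $(c_5)$ together with the lower bound in \eqref{eq:bilip} yields $\dY(f(m_1),f(m_2))\ge \vep\,\dX(s,t)/A$, and since $R\le \rho\le 2B\,\dX(s,t)$ (using the hypothesis $(1+\vep^q)^{1/q}\le 2$) we have $\vep\,\dX(s,t)/A\ge (\vep/2AB)R$. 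Chaining these together gives
\begin{equation*}
\diam\bigl(\mathrm{Mid}(f(s),f(t),\eta^*)\bigr)\;\ge\;\dY(f(m_1),f(m_2))\;\ge\;\tfrac{\vep}{2AB}\,R.
\end{equation*}

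By monotonicity of $\mathrm{Mid}(x,y,\cdot)$ in the parameter, the defining inequality for the rounded ball modulus at $t=\vep/(2AB)$ fails at $\eta^*$, which forces $\eta^*\ge \eta_\metY(\vep/2AB)$. Writing $\eta:=\eta_\metY(\vep/2AB)$, unwinding the definition of $\eta^*$ gives $R\le \rho/(1+\eta)$, and since $\eta\in(0,1]$ by Remark \ref{rema:rb_rem}, the elementary inequality $1/(1+\eta)\le 1-\eta/2$ delivers the claimed bound. The only slightly delicate point is the hypothesis $(1+\vep^q)^{1/q}\le 2$, which is needed precisely to bound the ambient scale $R\le 2B\,\dX(s,t)$ so that the rounded ball modulus can be evaluated at the clean argument $\vep/(2AB)$ rather than at a quantity depending on $q$; everything else is a direct translation of the uniform convexity computation into the language of approximate midpoint sets.
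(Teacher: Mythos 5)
Your proof is correct and follows essentially the same route as the paper's: both arguments place $f(m_1),f(m_2)$ in a common approximate midpoint set of $f(s),f(t)$ using $(c_4)$ and the upper Lipschitz bound, observe that this set has large diameter via $(c_5)$ and the lower Lipschitz bound, and invoke the rounded ball modulus together with $1/(1+\eta)\le 1-\eta/2$ to close. The only cosmetic difference is that you parametrize by $\eta^* = \rho/R-1$ directly, whereas the paper parametrizes by the smallest radius $r$ with $\{f(m_1),f(m_2)\}\subset B(f(s),r)\cap B(f(t),r)$ and then bounds $r\le\rho/2$; these are two phrasings of the same computation.
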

	
	\begin{proof}
		Let $r>0$ be the smallest radius such that $B_{\met Y}(f(s),r)\cap B_{\met Y}(f(t),r)\supseteq\{f(m_1),f(m_2)\}$.
		Then $$r\le \max\{d_{\met Y}(f(s),f(m_1)), d_{\met Y}(f(s),f(m_2)), d_{\met Y}(f(t),f(m_1)), d_{\met Y}(f(t),f(m_2))\}$$ and it follows from \eqref{eq:bilip} and $(c_4)$ that $r\le \frac B2(1+\vep^q)^{1/q}\dX(s,t)$. On the other hand,
		\begin{equation}
			\diam \lrpar{B_{\met Y}(f(s),r)\cap B_{\met Y}(f(t),r)}\ge d_{\met Y}(f(m_1),f(m_2)),
		\end{equation}
		and thus
		\begin{align*}
			\diam \lrpar{B_{\met Y}(f(s),r)\cap B_{\met Y}(f(t),r)} & \ge  \frac{1}{A}\dX(m_1,m_2) \stackrel{(c_5)}{=}  \frac{1}{A}\vep \dX(s,t)\\
			& \stackrel{(c_4)}{=}  \frac{1}{A}\vep \Big(\frac{\dX(s,m_1)}{(1+\vep^q)^{1/q}}+ \frac{\dX(t,m_1)}{(1+\vep^q)^{1/q}}\Big)\\
			& \ge  \frac{\vep}{AB(1+\vep^q)^{1/q}}\Big(\dY(f(s),f(m_1))+\dY(f(t),f(m_1))\Big)\\
			& \ge  \frac{\vep}{2AB}\dY(f(s),f(t)),
		\end{align*}
		where in the last inequality we used our assumption on $\vep$ and the triangle inequality.
		Therefore, $r\ge \frac{1+\eta_\metY(\vep/(2AB))}{2}\dY(f(s),f(t))$ by definition of the rounded ball modulus, and 
		\begin{align*}
			d_{\met Y}(f(s),f(t)) & \le  \frac{2r}{1+\eta_\metY(\vep/2AB)}\le \frac{B(1+\vep^q)^{1/q}\dX(s,t)}{1+\eta_\metY(\vep/2AB)}\\
			& \le  B\dX(s,t)(1+\vep^q)^{1/q}\Big(1-\frac{1}{2}\eta_\metY(\vep/2AB)\Big).
		\end{align*}
		where in the last inequality we used Remark \ref{rema:rb_rem}.
	\end{proof}
	
	A slightly different implementation of the self-improvement argument gives the following extension of Theorem \ref{theo:main1} to metric spaces with rounded ball modulus with power type. We only emphasize the few points in the proof that are different.
	
	\begin{theo}\label{theo:main2}
		Let $1<p<q$ and let $\metYd$ be a rounded ball metric space with power type $p$. For all $D\ge 1$ there exists $\vep:=\vep(D,p,q,\metY)>0$ such that if $\metXd$ admits a $(\vep,q)$-thin $k$-Laakso substructure and embeds bi-Lipschitzly with distortion at most $D\ge 1$ into $\metY$  then $D=\Omega(k^{1/p-1/q})$.
	\end{theo}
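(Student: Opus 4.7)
The plan is to follow the self-improvement template of Theorem \ref{theo:main1}, with Lemma \ref{lemm:contraction_rb} replacing Lemma \ref{lemm:contraction_uc} as the contraction engine. After rescaling, we may assume $\dX(x,y)\le \dY(f(x),f(y))\le D\dX(x,y)$ for all $x,y\in\metX$, and for each $j\in\{1,\ldots,k\}$ define $D_j$ to be the smallest constant such that $\dY(f(x),f(y))\le D_j\dX(x,y)$ holds for every pair $\{x,y\}$ indexed by a vertex pair of the form $\{s,m_i\}$ or $\{m_i,t\}$ inside a copy of $\sL_j$ in $\sL_k$. Clearly $1\le D_j\le D$; the aim is to show that $D_j$ decreases substantially from $D_k$ down to $D_1$.

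The key observation, directly inherited from the proof of Theorem \ref{theo:main1}, is that the proof of Lemma \ref{lemm:contraction_rb} invokes the upper Lipschitz constant $B$ only on the four pairs $\{f(s),f(m_i)\}$ and $\{f(m_i),f(t)\}$ (visible in the estimation of both $r$ and of $\dX(s,m_1),\dX(t,m_1)$), while the inverse constant $1/A$ is used only on the single pair $\{f(m_1),f(m_2)\}$. Applying the lemma locally to each of the two copies of $\sL_{j-1}$ between $s$ and $a$ and between $a$ and $m_1$ inside a top-level $\sL_1$ of a copy of $\sL_j$, with $A=1$ (the global lower Lipschitz constant) and $B=D_{j-1}$ (which by definition upper-bounds exactly the four pairs invoked by the lemma within the $\sL_{j-1}$ copy), yields
\[
\dY(f(s),f(a))\le D_{j-1}\dX(s,a)(1+\vep^q)^{1/q}\Big(1-\tfrac{1}{2}\eta_\metY\big(\tfrac{\vep}{2D_{j-1}}\big)\Big),
\]
and similarly for $\{a,m_1\}$. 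Combining via the triangle inequality, reusing the estimates coming from $(c_1),(c_3),(c_4)$ exactly as in Theorem \ref{theo:main1} to rewrite $\dX(s,a)+\dX(a,m_1)$ in terms of $\dX(s,m_1)$, and substituting the power-type hypothesis $\eta_\metY(t)\ge ct^p$, produces the recurrence
\[
D_j\le D_{j-1}(1+2^{q-1}\vep^q)\Big(1-\frac{c\vep^p}{2^{p+1}D_{j-1}^p}\Big).
\]

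From here the endgame is essentially identical to that of Theorem \ref{theo:main1}: expanding the product and using $D_{j-1}\le D$ gives $D_{j-1}-D_j\ge D\big(c\vep^p/(2^{p+1}D^p)-2^{q-1}\vep^q\big)$. Choosing $\vep=\gamma D^{-p/(q-p)}$ with $\gamma=\gamma(p,q,c)$ small enough that the leading $\vep^p$-term dominates the $\vep^q$-error, the parenthesis stays bounded below by $c\gamma^p/2^{p+2}$, so $D_{j-1}-D_j\gtrsim D^{1-pq/(q-p)}$. Telescoping over $j=2,\ldots,k$ yields $D\ge D_1-D_k\gtrsim k\cdot D^{1-pq/(q-p)}$, which rearranges to the desired bound $D=\Omega(k^{1/p-1/q})$.

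The main obstacle is not conceptual but a matter of careful bookkeeping: one must verify that Lemma \ref{lemm:contraction_rb}, although stated with global bi-Lipschitz constants $A$ and $B$, can be applied with the \emph{local} constants $A=1$ and $B=D_{j-1}$, i.e. that its proof only touches the upper Lipschitz bound on the four distinguished pairs at the top scale of the copy of $\sL_1$. This is the purely metric analogue of the observation used in the uniformly convex case of Theorem \ref{theo:main1}. The only genuinely new feature, stemming from the absence of linear midpoints in $\metY$, is the factor $\tfrac{1}{2}$ in front of $\eta_\metY$ produced by the elementary inequality $1/(1+x)\le 1-x/2$ on $(0,1)$ combined with Remark \ref{rema:rb_rem}; it is absorbed into a different but still universal multiplicative constant and does not alter the exponent $1/p-1/q$.
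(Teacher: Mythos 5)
Your proof is correct and follows essentially the same self-improvement route as the paper: define a local upper Lipschitz constant over the $\sL_j$-scale, observe that Lemma \ref{lemm:contraction_rb} invokes the upper bound only on the four pairs $\{s,m_i\}$, $\{m_i,t\}$ and the lower bound only on $\{m_1,m_2\}$, apply it locally, and telescope. The one place you deviate is the opening normalization: the paper keeps both constants $A,B$ (with $AB\le D$) and works with the quantity $AB_j$ throughout, because unlike the Banach-space case one cannot rescale the image of $f$. Your normalization $A=1$ is nevertheless legitimate, but for the reason you should make explicit: one rescales the \emph{domain} metric $\dX\mapsto\tfrac1A\dX$, and the thin Laakso conditions $(c_1)$--$(c_5)$ are scale-invariant, so the hypothesis on $\metX$ is preserved. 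With that clarified, the recurrence $D_j\le D_{j-1}(1+2^{q-1}\vep^q)\bigl(1-c\vep^p/(2^{p+1}D_{j-1}^p)\bigr)$, the choice $\vep=\gamma D^{-p/(q-p)}$, and the telescoping are exactly as in the paper.
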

	
	\begin{proof}
		Assume that for all $x,y\in \metX$ 
		\begin{equation}\label{eq:Lip2}
			\frac 1A\dX(x,y)\le \dY(f(x),f(y))\le B\dX(x,y),
		\end{equation} 
		
		with $AB\le D$.
		
		This time we define $B_j$ to be the smallest constant such that 
		\begin{equation}
			\norm{f(x)-f(y)}\le B_j \dX(x,y),
		\end{equation} 
		for all $4\times 6^{k-j}$ pairs of points $\{x,y\}$ in $\cL_k(\vep,q)$ that are indexed by vertices of a copy of $\sL_j$ in $\sL_k$ of the form $\{s,m_i\}$ or $\{m_i,t\}$ for some $i\in\{1,2\}$, where $s$ and $t$ are the farther apart vertices in $\sL_j$ whose two distinct midpoints are $m_1$ and $m_2$. 
		
		It is clear that for all $j\in\{1,\dots,k\}$, the inequalities $1\le B_j\le B$ hold. Since in the proof of Lemma \ref{lemm:contraction_rb} we have only used the upper bound in $\eqref{eq:Lip2}$ for pairs of points of the form described in the definition of $B_{j-1}$,  proceeding as in the proof of Theorem \ref{theo:main1} we show that 
		
		
		
		
		\begin{equation*}
			AB_{j-1}-AB_{j}  \ge  AB\left(\frac{c\vep^p}{2^{p+1}(AB)^p}-(2\vep)^q\right).
		\end{equation*}
		If we let $\vep=\gamma (AB)^{-\frac{p}{q-p}}$ for some small enough $\gamma$ to be chosen later (and that depends only on $p,q$, and $c$), then 
		\begin{equation*}
			AB_{j-1}-AB_{j} \ge (AB)^{1-\frac{pq}{q-p}}(\frac{c}{2^{p+1}}\gamma^p-2^q\gamma^q).
		\end{equation*}
		If we choose $0<\gamma<\Big(\frac{c}{2^{p+q+2}}\Big)^{1/(q-p)}$ we have $\frac{c}{2^{p+1}}\gamma^p-2^q\gamma^q\ge \frac{c}{2^{p+2}}\gamma^p>0$. Hence $AB_{j-1}-AB_{j} \ge\frac{c}{2^{p+2}}\gamma^p (AB)^{1-\frac{pq}{q-p}}$ and summing over $j=2,\dots, k$ we conclude that 
		$AB\gtrsim k^{1/p-1/q}$.
	\end{proof}

	We now identify a $4$-point inequality that implies the rounded ball property with power type $p$.
	
	\begin{lemm}
		Let $\metXd$ be a metric space and $p\in(0,\infty)$. If there exists $C\in(0,2^p]$ such that for all $x_1,x_2,x_3,x_4\in \metX$ we have  
		\begin{equation}\label{eq:4-point}
			\dX(x_1,x_3)^p+\dX(x_2,x_4)^p\le \frac{C}{4}\Big(\dX(x_1,x_2)^p+\dX(x_2,x_3)^p+\dX(x_3,x_4)^p+\dX(x_4,x_1)^p\Big),
		\end{equation}
		
		then $\metX$ is a rounded ball space with $\eta_\metX(t)\ge t^p/(2^p-1)$ if $p\ge 1$ and with $\eta_\metX(t)\ge t$ if $p\in(0,1)$.
	\end{lemm}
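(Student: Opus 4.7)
The plan is to apply the hypothesized four-point inequality \eqref{eq:4-point} to the specific quadruple $(x_1,x_2,x_3,x_4)=(x,u,y,v)$, where $u,v$ range over the approximate midpoint set $\mathrm{Mid}(x,y,\eta)$ for a parameter $\eta=\eta(t,p)$ to be chosen at the end. By the very definition of $\mathrm{Mid}(x,y,\eta)$, each of the four ``edge'' distances $\dX(x,u),\dX(u,y),\dX(y,v),\dX(v,x)$ is bounded by $\tfrac{1+\eta}{2}\dX(x,y)$, while the two ``diagonal'' distances $\dX(x,y)$ and $\dX(u,v)$ appear on the left-hand side of \eqref{eq:4-point}. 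Substituting and using the hypothesis $C\le 2^p$ to absorb the factor $C/2^p\le 1$ yields
\begin{equation*}
\dX(x,y)^p+\dX(u,v)^p \;\le\; (1+\eta)^p\,\dX(x,y)^p,
\end{equation*}
so $\dX(u,v)^p\le\bigl((1+\eta)^p-1\bigr)\dX(x,y)^p$. Since the right-hand side is independent of $u,v$, passing to the supremum gives $\diam(\mathrm{Mid}(x,y,\eta))^p \le \bigl((1+\eta)^p-1\bigr)\dX(x,y)^p$.

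It then suffices to pick $\eta$ so that $(1+\eta)^p-1<t^p$. If $p\ge 1$, convexity of $s\mapsto s^p$ on $[1,2]$ provides the secant bound $(1+\eta)^p-1\le (2^p-1)\eta$ for $\eta\in[0,1]$; hence every $\eta<t^p/(2^p-1)$ works, and taking the supremum over admissible $\eta$ yields $\eta_\metX(t)\ge t^p/(2^p-1)$. The constraint $\eta\in(0,1)$ noted in Remark \ref{rema:rb_rem} is automatic since $t\in(0,1]$ and $2^p-1\ge 1$. If instead $p\in(0,1)$, concavity of $s\mapsto s^p$ provides the cruder bound $(1+\eta)^p-1\le \eta$; combining with the elementary estimate $t\le t^p$ valid for $t\in(0,1]$ shows that every $\eta<t$ verifies $(1+\eta)^p-1\le\eta<t\le t^p$, whence $\eta_\metX(t)\ge t$.

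I do not anticipate a genuine obstacle, as the argument reduces to a single substitution in \eqref{eq:4-point} followed by a standard Bernoulli-type estimate specialized to each regime of $p$. The two mild subtleties to keep in mind are (i) the passage from the pointwise bound on $\dX(u,v)$ to the strict inequality required for the diameter in \eqref{eq:rbinequality}, handled by choosing $\eta$ strictly below the relevant threshold so that the pointwise bound is already strictly below $t^p\dX(x,y)^p$ uniformly in $u,v$; and (ii) verifying that the chosen $\eta(t)$ lies in the admissible window $(0,1)$, which as noted is automatic in both regimes whenever $t\in(0,1]$.
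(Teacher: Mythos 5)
Your proof is correct and follows essentially the same route as the paper: both apply \eqref{eq:4-point} to the quadruple $(x,u,y,v)$ with $u,v$ in the approximate midpoint set, absorb $C/2^p\le 1$, deduce $\dX(u,v)^p\le((1+\eta)^p-1)\dX(x,y)^p$, and then estimate $(1+\eta)^p-1$ by the secant (resp.\ Bernoulli) bound according to whether $p\ge1$ or $p\in(0,1)$. The only cosmetic difference is that you take $\eta$ strictly below the threshold and pass to a supremum to secure the strict inequality in \eqref{eq:rbinequality}, whereas the paper sets $\eta$ at the threshold directly (implicitly using that the secant and Bernoulli bounds are strict for $\eta\in(0,1)$); both arrive at the same conclusion.
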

	
	\begin{proof}
		Fix $t>0$ and let $x,y\in \metX$ and $\eta \in (0,1)$. If $\mathrm{Mid}(x,y,\eta)$ is empty or reduced to a single point there is nothing to prove. Otherwise, let $w\neq z\in \mathrm{Mid}(x,y,\eta)$. It follows from \eqref{eq:4-point} that
		
		$$\dX(x,y)^p+\dX(w,z)^p\le \frac{C}{4}(\dX(x,w)^p+\dX(w,y)^p+\dX(y,z)^p+\dX(z,x)^p),$$
		and by the definition of $\mathrm{Mid}(x,y,\eta)$, we have
		
		\begin{equation*}
			\dX(w,z)^p\le \Big(C\frac{(1+\eta)^p}{2^p}-1\Big)\dX(x,y)^p.
		\end{equation*}
		
		And since $C\le 2^p$,
		\begin{equation*}
			\dX(w,z)\le ((1+\eta)^p-1)^{\frac{1}{p}}\dX(x,y).
		\end{equation*}
		If $p\ge 1$ then $((1+\eta)^p-1)^{\frac{1}{p}}\le (2^p-1)^{1/p}\eta^{1/p}$, and if $\eta=t^p/(2^p-1)$, then
		$$\diam(\mathrm{Mid}(x,y,\eta))<t \dX(x,y).$$
		If $p\in(0,1)$ then $((1+\eta)^p-1)^{\frac{1}{p}}\le \eta$, and $\eta=t$ implies that
		$$\diam(\mathrm{Mid}(x,y,\eta))<t \dX(x,y).\qedhere$$
	\end{proof}
	
	Inequality \eqref{eq:4-point} when $p=2$ and $C=4$ is well known under various names: quadrilateral inequality, roundness $2$, Enflo type $2$ with constant $1$. It was proved by Berg and Nikolaev \cite{BergNikolaev07} (see also \cite{BergNikolaev08} or \cite{Sato09}) that the quadrilateral inequality characterizes CAT(0)-spaces amongst geodesic metric spaces and that CAT(0)-spaces coincide with Alexandrov spaces of non-positive curvatures; and this provides a rather large class of metric spaces which are rounded ball with power type $2$. It is not difficult to show that ultrametric spaces satisfy inequality \eqref{eq:4-point} with $p=1$ and $C=2$. We give one example of an application of Theorem \ref{theo:main2}.
	
	\begin{coro}\label{coro:round} 
		If $q>2$ and $\metYd$ is a metric space with roundness~2, in particular an Alexandrov space of non-positive curvature, then
		\begin{equation*}
			\cdist{\metY}(\ell_q^k)=\Omega\Big(k^{\frac12-\frac1q}\Big).
		\end{equation*}
	\end{coro}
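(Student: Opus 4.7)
The plan is to combine the $4$-point-inequality lemma immediately preceding the corollary with Theorem \ref{theo:main2} specialized at $p=2$. First, I would observe that a metric space $\metYd$ of roundness $2$ satisfies \eqref{eq:4-point} with $p=2$ and $C=4$. The preceding lemma then gives $\eta_\metY(t) \ge t^2/3$, so $\metY$ is a rounded ball space with power type $2$. The \emph{in particular} clause about Alexandrov spaces of non-positive curvature is handled by the Berg--Nikolaev characterization mentioned just before the corollary, which shows that CAT(0) geodesic spaces satisfy the quadrilateral inequality, and hence have roundness~$2$.

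For the main bound, I would suppose $f\colon \ell_q^k \to \metY$ is a bi-Lipschitz embedding with distortion $D$. Given this $D$, let $\vep = \vep(D,2,q,\metY) > 0$ be the threshold produced by Theorem \ref{theo:main2}. By Lemma \ref{lemm:thinLaakso} applied with exponent $q$, $\ell_q^k$ contains a $(\vep,q)$-thin $(k-1)$-Laakso substructure for this (in fact any) $\vep$. The restriction of $f$ to this substructure is a map into the rounded ball space $\metY$ of distortion at most $D$, so Theorem \ref{theo:main2} forces
\begin{equation*}
D = \Omega\bigl((k-1)^{1/2 - 1/q}\bigr) = \Omega\bigl(k^{1/2 - 1/q}\bigr),
\end{equation*}
which is precisely the asserted lower bound on $\cdist{\metY}(\ell_q^k)$.

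There is no genuine obstacle here beyond checking that the pieces fit together cleanly. The only mildly delicate point is that $\vep$ depends on the hypothesized $D$, but this is harmless because we are proving a lower bound on $D$: for every candidate distortion $D$ one first chooses the associated $\vep$, and then invokes the existence of the thin Laakso substructure in $\ell_q^k$, which Lemma \ref{lemm:thinLaakso} guarantees for \emph{every} $\vep > 0$. The same template would yield analogous lower bounds for any other class of metric targets for which one can verify a rounded ball modulus of some power type $p<q$, for example ultrametric spaces (where \eqref{eq:4-point} holds with $p=1$, $C=2$) giving a bound of order $k^{1-1/q}$.
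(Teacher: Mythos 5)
Your proof is correct and follows exactly the route the paper intends (the paper itself states the corollary without an explicit proof, expecting it to follow from the preceding lemma and Theorem~\ref{theo:main2}): roundness~$2$ gives \eqref{eq:4-point} with $p=2,C=4$, the lemma upgrades this to rounded-ball power type $2$, Berg--Nikolaev handles the CAT(0) case, and combining Lemma~\ref{lemm:thinLaakso} with Theorem~\ref{theo:main2} yields the $\Omega(k^{1/2-1/q})$ bound, with the order of quantifiers (choose $\vep$ after fixing $D$) handled properly. One small caveat on your closing aside about ultrametrics: as stated, Theorem~\ref{theo:main2} for rounded-ball targets requires $1<p<q$, so the $p=1$ case is not directly covered and would need a minor extension of the argument; this does not affect the corollary itself.
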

	
	\begin{rema}
		To the best of our knowledge, the only known proof of Corollary \ref{coro:round} can be found in the work of Eskenazis, Mendel, and Naor in \cite{EMN19} where it was shown that Alexandrov spaces of non-positive curvature have metric cotype $2$. This is a particular case of a much deeper result which says that $q$-barycentric metric spaces have \emph{sharp} metric cotype $q$, and whose proof partly relies on a version of Pisier's martingale inequality in the context of nonlinear martingales.
	\end{rema}

	\section{Embeddability obstruction via thin $\aleph_0$-branching diamond substructures}
	\label{sec:auc}
	
	Using the self-improvement argument together with the smallness of approximate midpoint sets to prove Theorem \ref{theo:dimreduction} has the other advantage of being easily generalizable to the asymptotic setting. It is well-known that the size of a $t$-approximate metric midpoint set in an asymptotically uniformly convex Banach spaces is ``small''. By ``small'' we mean that the set is included in the (Banach space) sum of a compact set and a ball of small radius. Therefore the techniques from the previous sections can be adequately modified to show that the presence of countably branching versions of the Laakso-type substructure is a bi-Lipschitz embeddability obstruction. A similar fact for countably branching diamond and Laakso graphs was first proved in \cite{Baudier-et-all-JFA} and generalized in \cite{Swift18}.
	
	The only reason to work with Laakso-type substructures in the previous sections was to produce spaces with the doubling property. In the asymptotic setting, we need to work with substructures whose underlying graphs have vertices with countably many neighbors and fail the doubling property altogether. Therefore, we will only consider simpler diamond-type substructures. 
 
	As noted in \cite{Baudier-et-all-JFA} it is more convenient to work with the notion of asymptotic midpoint uniform convexity. Let $\banX$ be a Banach space and $t\in(0,1)$. Define
	\[\tilde{\delta}_\banX(t)\eqd\inf_{x\in S_\banX}\sup_{Z\in \mathrm{cof}(\banX)}\inf_{z\in S_Z}\max\{\|x+tz\|, \|x-tz\|\}-1.\]
	The norm of $\banX$ is said to be asymptotically midpoint uniformly convex if $\tilde{\delta}_\banX(t)>0$ for every $t\in(0,1)$. Being asymptotically midpoint uniformly convexifiable is formally weaker than being asymptotically uniformly convexifiable. However, it is still open whether asymptotic uniform convexity and asymptotic midpoint uniform convexity are equivalent notions up to renorming. We now recall some facts that we will need which can be found in \cite{Baudier-et-all-JFA}. A characterization of asymptotic midpoint uniformly convex norms was given in \cite{DKRRZ13} in terms of the Kuratowski measure of noncompactness of approximate midpoint sets. Recall that the Kuratowski measure of noncompactness of a subset $S$ of a metric space, denoted by $\alpha(S)$, is defined as the infimum of all $\vep>0$ such that $S$ can be covered by a finite number of sets of diameter less than $\vep$. Note that it is a property of the metric.

	In \cite{DKRRZ13} it was shown that a Banach space $\banX$ is asymptotically midpoint uniformly convex if and only if $$\ds\lim_{t\rightarrow0}\sup_{x\in S_\banX}\alpha(\mathrm{Mid}(-x,x,t))=0.$$ To prove the main result of this section we need the following lemma which is a particular case of Lemma 4.3~ in \cite{Baudier-et-all-JFA}.
	
	\begin{lemm}\label{L:5.3}If the norm of a Banach space $\banX$ is asymptotically midpoint uniformly convex, then for every $t\in(0,1)$ and every $x,y\in \banX$ there exists a finite subset $S$ of $\banX$ such that
		\begin{equation}
			\mathrm{Mid}(x,y,\tilde{\delta}_\banX(t)/4)\subset S+2t\norm{x-y}B_\banX.
		\end{equation}
	\end{lemm}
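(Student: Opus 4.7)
The plan is to argue by contradiction, reducing to a normalized setting and then invoking the AMUC property through a compactness argument in a finite-dimensional quotient $\banX/Z$. After an affine change of variables the claim reduces to the following: for every $x_0 \in S_\banX$ there is a finite $S_0 \subset \banX$ with
\[
\mathrm{Mid}(-x_0, x_0, \tilde{\delta}_\banX(t)/4) \subset S_0 + 4t\, B_\banX,
\]
since $\|x-y\|=2$ in the normalization and the target radius $2t\|x-y\|$ becomes $4t$.

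Fix auxiliary parameters $\vep,\rho>0$, both small and to be tuned. By the definition of $\tilde{\delta}_\banX(t)$ applied at $x_0$, extract a finite-codimensional subspace $Z\subseteq \banX$ with
\[
\inf_{z \in S_Z}\, \max\{\|x_0 + tz\|, \|x_0 - tz\|\} \geq 1 + \tilde{\delta}_\banX(t) - \vep.
\]
Let $Q\colon \banX\to \banX/Z$ be the canonical quotient. Since $\banX/Z$ is finite-dimensional, the image $Q(\mathrm{Mid}(-x_0, x_0, \tilde{\delta}_\banX(t)/4))$ is bounded and can be covered by finitely many open balls of radius $\rho$ centered at points of the image itself. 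Lift these centers back into the midpoint set to produce a finite $S_0=\{s_1,\dots,s_N\}\subset \mathrm{Mid}(-x_0, x_0, \tilde{\delta}_\banX(t)/4)$.

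Given $w \in \mathrm{Mid}(-x_0, x_0, \tilde{\delta}_\banX(t)/4)$, choose $s_i\in S_0$ with $\|Q(w-s_i)\|<\rho$ and decompose $w-s_i=z+r$ with $z\in Z$ and $\|r\|<\rho$. It suffices to show $\|z\|\le 2t$, since together with $\rho\le 2t$ this yields $\|w-s_i\|\le 4t$. Introduce the corrected pair $u\eqd w-r/2$, $u'\eqd s_i+r/2$, so $u-u'=z$ lies exactly in $Z$, and the triangle inequality places $u,u'\in \mathrm{Mid}(-x_0, x_0, \tilde{\delta}_\banX(t)/4 + \rho/2)$. The identity $\tfrac12\bigl((u+x_0)+(x_0-u')\bigr)=z/2+x_0$ and its sign-flipped counterpart then give
\[
\max\{\|x_0 + z/2\|,\ \|x_0 - z/2\|\} \leq 1 + \tilde{\delta}_\banX(t)/4 + \rho/2.
\]
If $\|z\|>2t$, set $\tilde z\eqd tz/\|z\|\in Z$, so $\|\tilde z\|=t$ and $\tilde z$ lies on the segment from $0$ to $z/2$; convexity of the norm yields $\|x_0\pm\tilde z\|\le 1+(2t/\|z\|)(\tilde{\delta}_\banX(t)/4+\rho/2)$, which contradicts the AMUC lower bound $\ge 1+\tilde{\delta}_\banX(t)-\vep$ once $\vep$ and $\rho$ are chosen as small fixed fractions of $\tilde{\delta}_\banX(t)$.

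The main obstacle is the correction step: the quotient $Q$ controls $w-s_i$ only modulo $Z$, so the raw pair $(w,s_i)$ cannot be inserted into the AMUC inequality until the $Z$-transverse error $r$ has been absorbed. Replacing $(w,s_i)$ by $(u,u')$ accomplishes precisely this, at the price of inflating the midpoint parameter from $\tilde{\delta}_\banX(t)/4$ to $\tilde{\delta}_\banX(t)/4+\rho/2$; since the final contradiction only requires this parameter to remain a small fixed fraction of $\tilde{\delta}_\banX(t)$, the loss is harmless. Apart from this bookkeeping and the initial affine reduction, the remaining computations are routine convexity and triangle-inequality manipulations.
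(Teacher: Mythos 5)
Your proof is correct, and it is a self-contained argument for a lemma the paper only cites (as a special case of Lemma~4.3 in \cite{Baudier-et-all-JFA}) rather than proves. Your argument follows what is essentially the standard route: reduce by an affine change of variables to the symmetric case $\mathrm{Mid}(-x_0,x_0,\cdot)$ with $x_0\in S_\banX$; extract from the definition of $\tilde{\delta}_\banX(t)$ a finite-codimensional $Z$ that $\vep$-nearly attains the supremum at $x_0$; use that $\banX/Z$ is finite-dimensional to produce a finite $\rho$-net of $Q(\mathrm{Mid})$, lift it back into $\mathrm{Mid}$, and then show that any point of the midpoint set differing from a net point by a vector in $Z$ of norm greater than $2t$ would, after rescaling into $S_Z$ and using convexity of the norm, violate the defining inequality for $Z$. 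I have checked the details: the re-centering trick (replacing the pair $(w,s_i)$ by $(u,u')=(w-r/2,\,s_i+r/2)$ so that $u-u'=z\in Z$) is correct and does exactly what is needed, the cost being a harmless inflation of the midpoint parameter from $\tilde{\delta}_\banX(t)/4$ to $\tilde{\delta}_\banX(t)/4+\rho/2$; the identity $\tfrac12\bigl((u+x_0)+(x_0-u')\bigr)=x_0+z/2$ and its counterpart deliver $\max\{\|x_0\pm z/2\|\}\le 1+\tilde{\delta}_\banX(t)/4+\rho/2$; and the convexity step with $\tilde z=tz/\|z\|$ correctly produces the contradiction once, say, $\vep=\tilde{\delta}_\banX(t)/4$ and $\rho=\tilde{\delta}_\banX(t)/2$ (noting $\tilde{\delta}_\banX(t)\le t$, so $\rho\le 2t$, which is needed for the final bound $\|w-s_i\|\le\|z\|+\|r\|<4t$). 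The only stylistic remark is that you could state the choice of $\vep$ and $\rho$ once at the outset so the constraints $\rho/2+\vep<3\tilde{\delta}_\banX(t)/4$ and $\rho\le 2t$ are visibly satisfied, but this is cosmetic.
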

	
	We define thin diamond substructures that can be used to prove non-embeddability results.
	
	\begin{defi}[Thin $\kappa$-branching diamond substructure]
		Let $p\in[1,\infty)$, $\vep>0$, $\kappa$ be a cardinal number, and $I$ a set of cardinality $\kappa$. For $k\in \bN$, we say that a metric space $\metX$ admits a \emph{$(\vep,p)$-thin $\kappa$-branching $k$-diamond substructure} if there exists a collection $\cD^\kappa_k(\vep,p)$ of points indexed by $\sD^\kappa_k$ such that for every $1\le \ell \le k$ if $\{s,\{m_i\}_{i\in I}, t\}\subset \cD^\kappa_k$ is indexed by a copy of one of the diamond created at step $\ell$ then:
		\begin{enumerate}
			\item[($d_1$)]\label{eq:d1} $\dX(s,m_i)=\dX(m_i,t)=\frac12 (1+(2\vep)^p)^{1/p}\dX(s,t)$, \quad for all $i\in I$
			\item[($d_2$)]\label{eq:d2} $\dX(m_i,m_j)=2^{1-1/p}\vep\cdot \dX(s,t)$ for all $i\neq j$.
		\end{enumerate}
	\end{defi}
	
	In Lemma \ref{lemm:thindiamond} below, we provide a construction of a $(\vep,p)$-thin $\aleph_0$-branching $k$-diamond substructure in $L_p$-spaces, which in turns implies for all $p\in[1,\infty)$, $k\in\bN$, and $\vep>0$ the existence of an $(\vep,p)$-thin $\aleph_0$-branching $k$-diamond substructure.
	
	\begin{lemm}\label{lemm:thindiamond}
		For every $p\in[1,\infty)$, every $\vep>0$, and every $k\in\bN$; $L_p$ admits a $(\vep,p)$-thin $\aleph_0$-branching $k$-diamond substructure.
	\end{lemm}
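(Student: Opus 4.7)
The plan is to proceed by induction on $k$, mirroring the structure of the proof of Lemma~\ref{lemm:thinLaakso} but replacing the single orthogonal coordinate $e_{k+2}$ used there by a sequence of Rademacher functions. This is how we realize countably many midpoints sharing the prescribed mutual distance $2^{1-1/p}\vep\,\dX(s,t)$. The key identity, and the reason the constant $2^{1-1/p}$ in axiom $(d_2)$ is realizable in $L_p$ simultaneously for every $p\in[1,\infty)$, is that for a sequence $\{\rho_i\}_{i\in\bN}$ of independent $\pm 1$-valued Rademacher random variables on a probability space $(\Omega,\bP)$ one has $\|\rho_i\|_{L_p(\Omega)}=1$ and $\|\rho_i-\rho_j\|_{L_p(\Omega)}^p=\bE|\rho_i-\rho_j|^p=\tfrac12\cdot 2^p=2^{p-1}$, so that $\|\rho_i-\rho_j\|_{L_p(\Omega)}=2^{1-1/p}$ for all $i\neq j$.

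For the base case $k=1$, I would work in $L_p\oplus_p L_p(\Omega)$, which is still isometric to $L_p$. Pick a unit vector $e_0$ in the first summand and set $s\eqd -e_0$, $t\eqd e_0$, and $m_i\eqd 2\vep\rho_i$, viewed in the second summand. Because $e_0$ and each $\rho_i$ have disjoint supports, the disjoint-support Pythagorean identity in $L_p$ gives $\|m_i-s\|_p^p=\|m_i-t\|_p^p=1+(2\vep)^p$, and with $\dX(s,t)=2$ this matches $(d_1)$. The Rademacher identity then gives $\|m_i-m_j\|_p=2\vep\cdot 2^{1-1/p}=2^{2-1/p}\vep=2^{1-1/p}\vep\,\dX(s,t)$, matching $(d_2)$.

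For the inductive step, suppose $\cD^{\aleph_0}_k(\vep,p)$ has been realized in $L_p$. Enumerate the (countably many) edges of $\sD_k^{\aleph_0}$ as $\{\{u_e,v_e\}\}_{e\in\bN}$, and for each $e$ choose a fresh probability space $(\Omega_e,\bP_e)$ carrying its own independent Rademacher sequence $\{\rho_i^e\}_{i\in\bN}$. Identify the ambient space with $L_p\oplus_p\big(\bigoplus_{e\in\bN} L_p(\Omega_e)\big)_{\ell_p}$, which is still isometrically $L_p$. For each edge $e$ define the new midpoints
\[m_i^e\eqd\tfrac{u_e+v_e}{2}+\vep\,\|u_e-v_e\|_p\,\rho_i^e,\]
placed in the fresh summand $L_p(\Omega_e)$. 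The disjoint-support Pythagorean identity then yields $\|m_i^e-u_e\|_p^p=\big\|\tfrac{v_e-u_e}{2}\big\|_p^p+\vep^p\|u_e-v_e\|_p^p=\tfrac{1+(2\vep)^p}{2^p}\|u_e-v_e\|_p^p$, so $\|m_i^e-u_e\|_p=\tfrac12(1+(2\vep)^p)^{1/p}\|u_e-v_e\|_p$, verifying $(d_1)$; the Rademacher identity gives $\|m_i^e-m_j^e\|_p=\vep\,\|u_e-v_e\|_p\cdot 2^{1-1/p}$, verifying $(d_2)$. All distance relations from $\cD^{\aleph_0}_k(\vep,p)$ remain valid because the previously constructed vectors have vanishing component on the new summands.

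The only apparent obstacle is pure bookkeeping: each level of the induction introduces countably many new edges, each demanding its own countable independent Rademacher family, but this causes no difficulty because $L_p\cong\big(\bigoplus_{\bN}L_p\big)_{\ell_p}$ absorbs any countable system of mutually disjointly supported copies of $L_p$. In essence, the substantive mathematical input is the Rademacher $L_p$-identity, and this is precisely what dictates the particular thinness constants chosen in the definition of an $(\vep,p)$-thin $\aleph_0$-branching diamond substructure.
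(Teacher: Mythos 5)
Your proof is correct and takes essentially the same approach as the paper. The paper also proceeds by induction, realizing $\cD_k^\omega(\vep,p)$ in $L_p[0,k+1]$ and, at each step, adding to $\frac{s+t}{2}$ a perturbation supported on the next unit interval $[k+1,k+2]$ given by the dyadic step function $\sum_{r=1}^{2^i}(-1)^r\vep\|s-t\|_p\chi_{[k+1+\frac{r-1}{2^i},\,k+1+\frac{r}{2^i}]}$ --- which is precisely a concrete realization of your Rademacher variable $\rho_i$. The two verifications (disjoint supports for $(d_1)$, the Rademacher $L_p$-identity $\|\rho_i-\rho_j\|_p=2^{1-1/p}$ for $(d_2)$) are identical. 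The only cosmetic difference is that you allocate a fresh summand $L_p(\Omega_e)$ per edge, whereas the paper economizes by reusing the single interval $[k+1,k+2]$ for all midpoints introduced at level $k+1$; both are legitimate since the definition only constrains distances among the points $\{s,\{m_i\}_i,t\}$ of a single diamond copy, and $L_p\cong(\bigoplus_\bN L_p)_{\ell_p}$ absorbs either bookkeeping scheme.
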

	\begin{proof}
		Let $\chi_{i,j,k}$ stand in for the characteristic function $\chi_{\left[k+\frac{i-1}{2^j},k+\frac{i}{2^j}\right]}$.
		Fix $\varepsilon>0$.  The $(\vep,p)$-thin $\aleph_0$-branching $k$-diamond substructure in $L_p$ with parameter $\varepsilon>0$ is defined recursively as follows. For simplicity, we start the induction with the $0$-diamond graph $\sD^\omega_0$ to be a single edge with endpoint $s$ and $t$, and (again identifying the points in $\cD^\omega_k(\vep,p)$ with the vertices of $\sD^\omega_k$) we define $\cD_0^\omega(\vep,p):= \{s,t\}$ by $s\eqd \chi_{[0,1]}$ and $t \eqd -\chi_{[0,1]}$ and the conditions are vacuously satisfied.
		Suppose now that $\cD_k^\omega$ has already been defined such that $\cD_k^\omega\subseteq L_p[0,k+1]$. To construct $\cD_{k+1}^\omega$ we introduce for every edge $\{s,t\}\in \cD_{k}^\omega$ and $i\in \bN$ a ``midpoint'' as follows: 
		\begin{equation}
			m_i=\frac{s+t}{2}+\sum_{r=1}^{2^i}(-1)^r\varepsilon \|s-t\|_p\chi_{r,i,k+1}.
		\end{equation}
		
		Then, 
		
		\begin{align*}
			\norm{s-m_i}_p^p & = \bnorm{\frac{s-t}{2}-\sum_{r=1}^{2^i}(-1)^r\varepsilon\|s-t\|_p\chi_{r,i,k+1}}^p_p = \bnorm{\frac{s-t}{2}}_p^p+\bnorm{\sum_{r=1}^{2^i}(-1)^r\varepsilon \|s-t\|_p\chi_{r,i,k+1}}^p_p\\
			& = \bnorm{\frac{s-t}{2}}_p^p+\varepsilon^p\norm{s-t}_p^p = \frac{(1+(2\varepsilon)^p)}{2^p}\norm{s-t}_p^p, 
		\end{align*}
		
		wherein the second equality we used the fact that the vectors have disjoint supports (in $[0,k+1]$ and $[k+1,k+2]$, respectively).
		
		For $i<j$, observe that $\chi_{r,i,k+1}=\sum_{\ell=(r-1)2^{j-i}+1}^{r2^{j-i}}\chi_{\ell,j,k+1}$, and so
		\begin{align*}
			\norm{m_i-m_j}_p^p & = \bnorm{\sum_{r=1}^{2^{i}}(-1)^r \vep \norm{s-t}_p \chi_{r,i,k+1}-\sum_{r=1}^{2^{j}}(-1)^r \vep \norm{s-t}_p \chi_{r,j,k+1}}^p_p\\
			& =  \vep^p \norm{s-t}^p_p\left\|\sum_{r=1}^{2^{i}}\sum_{\ell=(r-1)2^{j-i}+1}^{r2^{j-i}}\Big((-1)^{r}-(-1)^{\ell}\Big)\chi_{\ell,j,k+1}\right\|_p^p\\
			& =  \vep^p \norm{s-t}^p_p \left(\sum_{r=1}^{2^{i}}\sum_{\ell=(r-1)2^{j-i}+1}^{r2^{j-i}}\int_{k+1+\frac{\ell-1}{2^{j}}}^{k+1+\frac{\ell}{2^{j}}}\left|(-1)^{r}-(-1)^{\ell}\right|^p\mathop{dx}\right)\\
			& =  \vep^p \norm{s-t}^p_p \left(\sum_{r=1}^{2^{i}} \frac{2^{j-i}}{2} \cdot 2^{-j} \cdot 2^p\right)\\
			& =  \vep^p \norm{s-t}^p_p \cdot\frac{1}{2}\cdot 2^p\\
			& =  2^{p-1} \vep^p \norm{s-t}^p_p\qedhere\\
		\end{align*}

	\end{proof}
	
	Next, we prove the contraction principle that is needed in the asymptotic setting.
	
	\begin{lemm}\label{lemm:contraction_amuc}
		Let $\vep>0$ such that $(1+(2\vep)^p)^{1/p}\le 2$ and let $\kappa$ be an infinite cardinality. Assume that $\cD^\kappa_k (\vep,p)$ is a $(\vep,p)$-thin $\kappa$-branching $k$-diamond substructure in $\metXd$ and that $f\colon \metX \to \banYn$ is a bi-Lipschitz embedding with distortion $D$. Then for every $1\le \ell \le k$, if $\{s,\{m_i\}_{i\in I}, t\}\subset \cD^\kappa_k$ is indexed by a copy of one of the diamond graph $\sD_1^\kappa$ created at step $\ell$, we have: 
		\begin{equation}
			\|f(s)-f(t)\|\le D\dX(s,t)(1+(2\vep)^p)^{1/p}\left(1-\frac15\tilde{\delta}_\banY\left(\frac{\vep}{16D}\right)\right).
		\end{equation} 
	\end{lemm}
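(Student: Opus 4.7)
The plan is to argue by contradiction in the same spirit as Lemma~\ref{lemm:contraction_uc}, but replacing the modulus of uniform convexity by the smallness-of-midpoints description of asymptotic midpoint uniform convexity furnished by Lemma~\ref{L:5.3}. As usual we may assume $\dX(x,y)\le\|f(x)-f(y)\|\le D\dX(x,y)$ for all $x,y\in\metX$, and we set $L\eqd\|f(s)-f(t)\|$ and $\delta\eqd\tilde\delta_\banY(\vep/(16D))$. Suppose, toward a contradiction, that
\[
L \;>\; D\dX(s,t)(1+(2\vep)^p)^{1/p}\Bigl(1-\tfrac15\delta\Bigr).
\]

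From $(d_1)$ and the upper Lipschitz bound, $\max\{\|f(s)-f(m_i)\|,\|f(t)-f(m_i)\|\}\le \tfrac{D}{2}(1+(2\vep)^p)^{1/p}\dX(s,t)$ for every $i\in I$, so each $f(m_i)$ belongs to $\mathrm{Mid}(f(s),f(t),\eta_0)$ with
\[
\eta_0 \;=\; \frac{D(1+(2\vep)^p)^{1/p}\dX(s,t)}{L}-1.
\]
Under the contradictory hypothesis, $\eta_0<\frac{1}{1-\delta/5}-1=\frac{\delta/5}{1-\delta/5}$. Since $\delta\le 1$, this gives $\eta_0<\delta/4$, so in fact $\{f(m_i)\}_{i\in I}\subset\mathrm{Mid}(f(s),f(t),\delta/4)$.

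Now I apply Lemma~\ref{L:5.3} with $t=\vep/(16D)$, $x=f(s)$, $y=f(t)$: there is a finite set $S\subset\banY$ such that
\[
\mathrm{Mid}\bigl(f(s),f(t),\delta/4\bigr)\;\subset\; S+\tfrac{\vep L}{8D}\,B_\banY.
\]
Because $I$ is infinite (the assumption $\kappa=\aleph_0$ will suffice) and the $f(m_i)$ are pairwise distinct, the pigeonhole principle forces two indices $i\neq j$ with $f(m_i)$ and $f(m_j)$ lying in a common translate of $\frac{\vep L}{8D}B_\banY$, whence $\|f(m_i)-f(m_j)\|\le \frac{\vep L}{4D}$. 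On the other hand, $(d_2)$ together with the lower Lipschitz bound yields $\|f(m_i)-f(m_j)\|\ge 2^{1-1/p}\vep\,\dX(s,t)\ge\vep\,\dX(s,t)$. Combining the two estimates gives $L\ge 4D\dX(s,t)$, contradicting the upper Lipschitz bound $L\le D\dX(s,t)$.

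The only delicate step is the passage from the asymptotic contraction hypothesis to a bound of the form $\eta_0<\tilde\delta_\banY(\cdot)/4$ that matches the hypothesis of Lemma~\ref{L:5.3}; this forces the specific numerical constants $1/5$ and $1/16$ appearing in the statement (any comparable pair would work, but those cleanly yield $\eta_0<\delta/4$ via $\frac{\delta/5}{1-\delta/5}\le\delta/4$ when $\delta\le1$). Everything else is bookkeeping plus a pigeonhole that exploits the countable branching of $\sD_1^\kappa$ against the finiteness of the compact approximation $S$ from Lemma~\ref{L:5.3}, exactly the feature that was unavailable in the purely uniformly convex setting of Lemma~\ref{lemm:contraction_uc}.
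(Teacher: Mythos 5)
Your proof is correct and follows essentially the same strategy as the paper's: both proofs invoke Lemma~\ref{L:5.3} with the parameter $t=\vep/(16D)$ and then play the infinitude of $\{m_i\}_{i\in I}$ against the finiteness of the covering set $S$. The only difference is cosmetic (you run one global contradiction, while the paper isolates the key claim that some $f(m_j)$ escapes the midpoint set and then derives the bound), and your final numerical contradiction $L\ge 4D\,\dX(s,t)$ versus $L\le D\,\dX(s,t)$ is a slightly cleaner packaging of the same separation estimate.
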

	
	\begin{proof}
		Assume that for all $x,y\in \metX$ 
		\begin{equation}\label{eq:Lip5}
			\dX(x,y)\le \norm{f(x)-f(y)}\le D\dX(x,y).
		\end{equation} 
		
		We claim that there exists $j\in\mathbb{N}$ such that 
		\begin{equation}\label{eq:mid}
			f(m_j)\notin\text{Mid}\left(f(s),f(t),\frac{1}{4}\tilde{\delta}_\banY\left(\frac{\vep}{16D}\right)\right).
		\end{equation}

		Assuming for a moment that \eqref{eq:mid} holds, then we have either
		\begin{equation*}
			\|f(m_j)-f(t)\|>\frac12 \left(1+\frac{1}{4}\tilde{\delta}_\banY\left(\frac{\vep}{16D}\right)\right)\|f(s)-f(t)\|
		\end{equation*}
		or
		\begin{equation*}
			\|f(m_j)-f(s)\|>\frac12 \left(1+\frac{1}{4}\tilde{\delta}_\banY\left(\frac{\vep}{16D}\right)\right)\|f(s)-f(t)\|.
		\end{equation*}
		In both cases it follows from \eqref{eq:Lip5} and condition $(d_1)$ above that 
		\begin{align*}
			\|f(s)-f(t)\|& < D\dX(s,t) (1+(2\vep)^p)^{1/p}\left(1+\frac{1}{4}\tilde{\delta}_X\left(\frac{\vep}{16D}\right)\right)^{-1}\\
			& \le D\dX(s,t) (1+(2\vep)^p)^{1/p}\left(1-\frac{1}{5}\tilde{\delta}_X\left(\frac{\vep}{16D}\right)\right).
		\end{align*}

		It remains to prove \eqref{eq:mid}. By Lemma \ref{L:5.3} there exists a finite subset $S:=\{z_1,\dots, z_n\}\subset \banY$ such that 
		\begin{equation*}
			\text{Mid}\left(f(s),f(t),\frac{1}{4}\tilde{\delta}_\banY\left(\frac{\vep}{16D}\right)\right)\subset S+\frac{\vep}{8D}\norm{f(s)-f(t)}B_\banY.
		\end{equation*}
		If for every $i\in\bN$, 
		\begin{equation*}
			f(m_i)\in\text{Mid}\left(f(s),f(t),\frac{1}{4}\tilde{\delta}_\banY\left(\frac{\vep}{16D}\right)\right),
		\end{equation*}
		then $f(m_i)=z_{n_i}+y_i$ with $z_{n_i}\in S$ and $y_i\in \banY$ so that 
		\begin{equation*}
			\norm{y_i}\le\frac{\vep}{8D}\norm{f(s)-f(t)}.
		\end{equation*}
		Therefore, for all $i\neq j$, 
		
		\begin{align*}
			\norm{z_{n_i}-z_{n_j}} &\ge\norm{f(m_i)-f(m_j)}-\norm{y_i-y_j}\\
			&\ge \dX(m_i,m_j)-\frac{\vep}{4D}\norm{f(s)-f(t)}\\
			&\ge \dX(m_i,m_j)-\frac{\vep}{4D}(\norm{f(s)-f(m_i)}+\norm{f(m_i)-f(t)})\\
			&\ge 2^{1-1/p}\vep\cdot \dX(s,t) -\frac{\vep}{4}(1+(2\vep)^p)^{1/p}\dX(s,t)\\
			&\ge 2^{1-1/p}\vep\cdot \dX(s,t) -\frac{\vep}{2}\dX(s,t)\\
			&\ge \frac12\vep\cdot \dX(s,t)>0, 
		\end{align*}
		which contradicts the fact that $S$ is finite.
	\end{proof}
	
	Since in the proof of Lemma \ref{lemm:contraction_amuc} we were careful to only use the upper bound in \eqref{eq:Lip5} for pairs of points of the form $\{s,m_i\}$ or $\{t,m_i\}$, the derivation of Theorem \ref{theo:main2} below from Lemma \ref{lemm:contraction_amuc} is by now standard and thus omitted.
	
	\begin{theo}\label{theo:main2}
		Let $1\le p<q$ and assume that $\metXd$ admits a bi-Lipschitz embedding with distortion $D$ into a $p$-asymptotically midpoint uniformly convex Banach space $\banY$. There exists $\vep:=\vep(p,q,D,\banY)>0$ such that if $\metX$ admits a $(\vep,q)$-thin $\aleph_0$-branching $k$-diamond substructure then $D=\Omega(k^{1/p-1/q})$.
	\end{theo}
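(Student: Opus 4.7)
The plan is to mimic the self-improvement argument used in the proof of Theorem \ref{theo:main1}, substituting the countably branching diamond substructure $\cD_k^\omega(\vep,q)$ for the Laakso substructure and the asymptotic contraction Lemma \ref{lemm:contraction_amuc} for the uniform convexity Lemma \ref{lemm:contraction_uc}. The recursive scaffolding is provided by the decompositions $\sD_k^\omega = \sD_{k-j}^\omega \oslash \sD_j^\omega$ and $\sD_j^\omega = \com_{2,\omega}\oslash \sD_{j-1}^\omega$. The dependence $\vep=\vep(p,q,D,\banY)$ will come out as a power $\vep = \gamma D^{-p/(q-p)}$ for $\gamma>0$ sufficiently small (depending on $p,q$, and the constant $c$ from $\tilde{\delta}_\banY(t)\ge ct^p$).

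First, normalize $f$ so that $\dX(x,y)\le\|f(x)-f(y)\|\le D\,\dX(x,y)$, and for $j\in\{1,\dots,k\}$ define $D_j$ to be the smallest constant for which
\begin{equation*}
\|f(x)-f(y)\|\le D_j\,\dX(x,y)
\end{equation*}
holds for every pair $\{x,y\}\subset\cD_k^\omega(\vep,q)$ indexed by some pair of the form $\{s,m_i\}$ or $\{m_i,t\}$ belonging to the top $\com_{2,\omega}$ of a copy of $\sD_j^\omega$ inside $\sD_k^\omega$ (where $s,t$ are the extreme vertices of that $\sD_j^\omega$ and the $m_i$ are its top-level midpoints). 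Clearly $1\le D_j\le D$. Fix a copy $\sD_j^0$ and decompose it as $\com_{2,\omega}\oslash \sD_{j-1}^\omega$ with top-level vertices $\{s,\{m_i\}_{i\in\bN},t\}$. For each $i\in\bN$, the pair $\{s,m_i\}$ consists of the two extreme vertices of the copy of $\sD_{j-1}^\omega$ that replaces the edge $\{s,m_i\}$ of the top $\com_{2,\omega}$. Apply Lemma \ref{lemm:contraction_amuc} to the top $\com_{2,\omega}$ of this $\sD_{j-1}^\omega$: mirroring the key observation in Theorem \ref{theo:main1}, the proof of Lemma \ref{lemm:contraction_amuc} invokes the upper Lipschitz bound only on pairs $\{s,m'\}$ and $\{m',m_i\}$ where $m'$ ranges over the top-level midpoints of this $\sD_{j-1}^\omega$, and these are exactly the pairs controlled by $D_{j-1}$. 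Choosing the parameter $\vep/(16D_{j-1})$ in the application of Lemma \ref{L:5.3} preserves the cancellation between the factor $\tfrac{\vep}{4D_{j-1}}$ and the Lipschitz constant $D_{j-1}$, so the constant $D$ in Lemma \ref{lemm:contraction_amuc} may be consistently replaced by $D_{j-1}$. This yields
\begin{equation*}
\|f(s)-f(m_i)\|\le D_{j-1}\,\dX(s,m_i)\,(1+(2\vep)^q)^{1/q}\Big(1-\tfrac{1}{5}\tilde{\delta}_{\banY}\big(\tfrac{\vep}{16D_{j-1}}\big)\Big),
\end{equation*}
and the analogous bound for $\{m_i,t\}$. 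Since $i$, the copy $\sD_j^0$, and the orientation were arbitrary, we obtain the recursion
\begin{equation*}
D_j\le D_{j-1}(1+(2\vep)^q)^{1/q}\Big(1-\tfrac{1}{5}\tilde{\delta}_{\banY}\big(\tfrac{\vep}{16D_{j-1}}\big)\Big).
\end{equation*}

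Plugging in $\tilde{\delta}_{\banY}(t)\ge ct^p$, the inequality $(1+(2\vep)^q)^{1/q}\le 1+(2\vep)^q$ for $q\ge 1$, and $D_{j-1}\le D$ together with the monotonicity of $x\mapsto x^{1-p}$ for $p\ge 1$, an elementary expansion produces
\begin{equation*}
D_{j-1}-D_j\ge c_1\,D^{1-p}\vep^p-D(2\vep)^q
\end{equation*}
for some $c_1=c_1(p,c)>0$. Setting $\vep=\gamma D^{-p/(q-p)}$ makes the two terms scale identically in $D$, both as $D^{(q-p-pq)/(q-p)}$, and choosing $\gamma$ small enough (depending only on $p,q,c$) guarantees $c_1\gamma^p-2^q\gamma^q\ge c_1\gamma^p/2>0$. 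Telescoping from $j=2$ to $k$ gives $D\ge D_1-D_k\gtrsim k\,D^{(q-p-pq)/(q-p)}$, which rearranges to $D^{pq/(q-p)}\gtrsim k$, i.e.\ $D=\Omega(k^{1/p-1/q})$. The main technical point, as in Theorem \ref{theo:main1}, is the bookkeeping for the localization of Lemma \ref{lemm:contraction_amuc}: the parameter $D$ appearing inside $\tilde{\delta}_{\banY}(\vep/(16D))$, in the ball-radius $\tfrac{\vep}{8D}\|f(s)-f(t)\|$ of Lemma \ref{L:5.3}, and implicitly in the triangle-inequality bound on $\|f(s)-f(t)\|$, must all be replaced by $D_{j-1}$ simultaneously, and one has to verify that the strict inequality $\|z_{n_i}-z_{n_j}\|>0$ (the pigeonhole step producing the desired contradiction) continues to hold after this substitution.
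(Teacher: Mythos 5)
Your proposal is correct and reconstructs the derivation the paper leaves implicit (the paper asserts only that the result follows from Lemma \ref{lemm:contraction_amuc} by the ``by now standard'' self-improvement scheme, and omits the details). Your localization of the contraction lemma --- replacing $D$ by $D_{j-1}$ consistently inside $\tilde{\delta}_\banY$, in the radius coming from Lemma \ref{L:5.3}, and in the triangle-inequality bound on $\|f(s)-f(t)\|$, then verifying that the cancellation in the pigeonhole step still yields $\|z_{n_i}-z_{n_j}\|\ge\frac{\vep}{2}\dX(s,t)>0$ --- is exactly the required bookkeeping, and you correctly observe that the decomposition $\sD_j^\omega=\com_{2,\omega}\oslash\sD_{j-1}^\omega$ makes the recursion cleaner than in Theorem \ref{theo:main1} since each $\{s,m_i\}$ is directly the extreme pair of a copy of $\sD_{j-1}^\omega$, so no intermediate triangle inequality through a vertex $a$ is needed.
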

	
	The following consequence is immediate.
	
	\begin{coro}\label{cor:appli}
		$L_q[0,1]$ does not bi-Lipschitzly embed into any $p$-asymptotically midpoint uniformly convex Banach space if $q>p\ge 1$. In particular, $L_q[0,1]$ does not bi-Lipschitzly embed into $\ell_p$ if $q>p\ge 1$.
	\end{coro}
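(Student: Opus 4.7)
The plan is to derive the corollary as a direct synthesis of Theorem \ref{theo:main2} and Lemma \ref{lemm:thindiamond}, arguing by contradiction. Suppose that $f\colon L_q[0,1]\to \banY$ is a bi-Lipschitz embedding with some finite distortion $D$ into a $p$-asymptotically midpoint uniformly convex Banach space $\banY$, where $1\le p<q$. The goal is to produce, inside $L_q[0,1]$, a sequence of finite metric substructures whose non-embeddability into $\banY$ with distortion $D$ will force $k\to\infty$ to contradict the finiteness of $D$.

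First, I would feed the data $(p,q,D,\banY)$ into Theorem \ref{theo:main2} (run as a selection rule rather than a conclusion): this supplies a single threshold $\vep=\vep(p,q,D,\banY)>0$ such that any metric space containing a $(\vep,q)$-thin $\aleph_0$-branching $k$-diamond substructure and admitting a bi-Lipschitz embedding with distortion at most $D$ into $\banY$ must satisfy $D=\Omega(k^{1/p-1/q})$. Fix this $\vep$ once and for all.

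Next, for each $k\in\bN$, I would apply Lemma \ref{lemm:thindiamond} with its role-parameter set to $q$ (rather than $p$), producing a $(\vep,q)$-thin $\aleph_0$-branching $k$-diamond substructure $\cD_k^\omega(\vep,q)$ inside $L_q$. The inductive construction in that lemma places $\cD_k^\omega(\vep,q)$ inside $L_q[0,k+1]$, and since $L_q[0,k+1]$ is linearly isometric to $L_q[0,1]$ via a standard measure-preserving rescaling, the substructure can be realized inside $L_q[0,1]$ itself. Restricting $f$ to this copy gives a bi-Lipschitz embedding with distortion at most $D$, and the conclusion of Theorem \ref{theo:main2} yields $D=\Omega(k^{1/p-1/q})$. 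Since $q>p$ makes the exponent strictly positive, letting $k\to\infty$ forces $D=\infty$, a contradiction.

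The ``in particular'' statement is then immediate once one recalls the classical fact that $\ell_p$ is $p$-asymptotically uniformly convex (hence $p$-asymptotically midpoint uniformly convex) with $\tilde\delta_{\ell_p}(t)\gtrsim t^p$. I do not expect a genuine obstacle here: all the substantive work is already carried by Lemma \ref{lemm:thindiamond} (existence of the substructures in $L_q$) and by the self-improvement argument encoded in Theorem \ref{theo:main2}; the only point requiring a token verification is that the rescaling $L_q[0,k+1]\cong L_q[0,1]$ transports the interpoint-distance identities $(d_1)$--$(d_2)$ faithfully, which is automatic since they are purely metric.
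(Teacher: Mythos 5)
Your proposal is correct and matches the paper's intended argument: the paper simply declares the corollary ``immediate'' from Theorem~\ref{theo:main2} and Lemma~\ref{lemm:thindiamond}, and the synthesis you spell out (fix $\vep=\vep(p,q,D,\banY)$ from the theorem, use the lemma to place a $(\vep,q)$-thin $\aleph_0$-branching $k$-diamond substructure in $L_q\cong L_q[0,1]$ for every $k$, and let $k\to\infty$ to contradict finiteness of $D$) is exactly that derivation. The remark about $\ell_p$ being $p$-asymptotically (midpoint) uniformly convex with $\tilde\delta_{\ell_p}(t)\gtrsim t^p$ is likewise the standard fact the paper relies on for the ``in particular'' clause.
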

	
	\begin{rema}
		Corollary \ref{cor:appli} is not new since it can be shown using classical approximate midpoint techniques (see \cite[Chapter 10, Section 2]{BenyaminiLindenstrauss00} or \cite{KaltonLova08} for instance). The classical approximate midpoint technique provides an obstruction of qualitative nature and relies on some linear arguments but it can handle weaker notions of embeddings. Our proof of Theorem \ref{theo:main2}, and in turn of Corollary \ref{cor:appli}, identifies concrete and purely metric structures that provide quantitative obstructions to bi-Lipschitz embeddings. 
	\end{rema}
	
	\bibliographystyle{alpha}
\begin{bibdiv}
\begin{biblist}

\bib{Alon03}{incollection}{
      author={Alon, Noga},
       title={Problems and results in extremal combinatorics. {I}},
        date={2003},
      volume={273},
       pages={31\ndash 53},
         url={https://doi.org/10.1016/S0012-365X(03)00227-9},
        note={EuroComb'01 (Barcelona)},
      review={\MR{2025940}},
}

\bib{ANT13}{article}{
      author={Austin, Tim},
      author={Naor, Assaf},
      author={Tessera, Romain},
       title={{Sharp quantitative nonembeddability of the Heisenberg group into
  superreflexive Banach spaces}},
        date={2013},
        ISSN={1661-7207},
     journal={Groups, Geometry, and Dynamics},
      volume={7},
      number={3},
       pages={497\ndash 522},
}

\bib{Ball90}{article}{
      author={Ball, Keith},
       title={Isometric embedding in {$l_p$}-spaces},
        date={1990},
        ISSN={0195-6698},
     journal={European J. Combin.},
      volume={11},
      number={4},
       pages={305\ndash 311},
         url={https://doi.org/10.1016/S0195-6698(13)80131-X},
      review={\MR{1067200}},
}

\bib{BrinkmanCharikar05}{article}{
      author={Brinkman, Bo},
      author={Charikar, Moses},
       title={On the impossibility of dimension reduction in {$l_1$}},
        date={2005},
        ISSN={0004-5411},
     journal={J. ACM},
      volume={52},
      number={5},
       pages={766\ndash 788},
         url={https://doi.org/10.1145/1089023.1089026},
      review={\MR{2176562}},
}

\bib{Baudier-et-all-JFA}{article}{
      author={Baudier, Florent},
      author={Causey, Ryan},
      author={Dilworth, Stephen},
      author={Kutzarova, Denka},
      author={Randrianarivony, Nirina~L.},
      author={Schlumprecht, Thomas},
      author={Zhang, Sheng},
       title={On the geometry of the countably branching diamond graphs},
        date={2017},
        ISSN={0022-1236},
     journal={J. Funct. Anal.},
      volume={273},
      number={10},
       pages={3150\ndash 3199},
         url={https://doi.org/10.1016/j.jfa.2017.05.013},
      review={\MR{3695891}},
}

\bib{BGN_SoCG14}{incollection}{
      author={Bartal, Yair},
      author={Gottlieb, Lee-Ad},
      author={Neiman, Ofer},
       title={On the impossibility of dimension reduction for doubling subsets
  of {$\ell_p$}},
        date={2014},
   booktitle={Computational geometry ({S}o{CG}'14)},
   publisher={ACM, New York},
       pages={60\ndash 66},
      review={\MR{3382276}},
}

\bib{BGN_SIAM15}{article}{
      author={Bartal, Yair},
      author={Gottlieb, Lee-Ad},
      author={Neiman, Ofer},
       title={On the impossibility of dimension reduction for doubling subsets
  of {$\ell_p$}},
        date={2015},
        ISSN={0895-4801},
     journal={SIAM J. Discrete Math.},
      volume={29},
      number={3},
       pages={1207\ndash 1222},
         url={https://doi.org/10.1137/140977655},
      review={\MR{3369992}},
}

\bib{BenyaminiLindenstrauss00}{book}{
      author={Benyamini, Yoav},
      author={Lindenstrauss, Joram},
       title={Geometric nonlinear functional analysis. {V}ol. 1},
      series={American Mathematical Society Colloquium Publications},
   publisher={American Mathematical Society, Providence, RI},
        date={2000},
      volume={48},
        ISBN={0-8218-0835-4},
         url={https://doi.org/10.1090/coll/048},
      review={\MR{1727673}},
}

\bib{BergNikolaev07}{article}{
      author={Berg, I.~D.},
      author={Nikolaev, I.~G.},
       title={On a distance characterization of {A}. {D}. {A}leksandrov spaces
  of nonpositive curvature},
        date={2007},
        ISSN={0869-5652},
     journal={Dokl. Akad. Nauk},
      volume={414},
      number={1},
       pages={10\ndash 12},
         url={https://doi.org/10.1134/S1064562407030027},
      review={\MR{2447040}},
}

\bib{BergNikolaev08}{article}{
      author={Berg, I.~D.},
      author={Nikolaev, I.~G.},
       title={Quasilinearization and curvature of {A}leksandrov spaces},
        date={2008},
        ISSN={0046-5755},
     journal={Geom. Dedicata},
      volume={133},
       pages={195\ndash 218},
         url={https://doi.org/10.1007/s10711-008-9243-3},
      review={\MR{2390077}},
}

\bib{BaudierZhang16}{article}{
      author={Baudier, Florent~P.},
      author={Zhang, Sheng},
       title={{$(\beta)$}-distortion of some infinite graphs},
        date={2016},
        ISSN={0024-6107},
     journal={J. Lond. Math. Soc. (2)},
      volume={93},
      number={2},
       pages={481\ndash 501},
         url={https://doi.org/10.1112/jlms/jdv074},
      review={\MR{3483124}},
}

\bib{CKN11}{article}{
      author={Cheeger, Jeff},
      author={Kleiner, Bruce},
      author={Naor, Assaf},
       title={Compression bounds for {L}ipschitz maps from the {H}eisenberg
  group to {$L_1$}},
        date={2011},
        ISSN={0001-5962},
     journal={Acta Math.},
      volume={207},
      number={2},
       pages={291\ndash 373},
         url={https://doi.org/10.1007/s11511-012-0071-9},
      review={\MR{2892612}},
}

\bib{DKRRZ13}{article}{
      author={Dilworth, S.~J.},
      author={Kutzarova, Denka},
      author={Lovasoa~Randrianarivony, N.},
      author={Revalski, J.~P.},
      author={Zhivkov, N.~V.},
       title={Compactly uniformly convex spaces and property {$(\beta)$} of
  {R}olewicz},
        date={2013},
        ISSN={0022-247X},
     journal={J. Math. Anal. Appl.},
      volume={402},
      number={1},
       pages={297\ndash 307},
         url={https://doi.org/10.1016/j.jmaa.2013.01.039},
      review={\MR{3023259}},
}

\bib{EMN19}{article}{
      author={Eskenazis, Alexandros},
      author={Mendel, Manor},
      author={Naor, Assaf},
       title={Nonpositive curvature is not coarsely universal},
        date={2019},
        ISSN={0020-9910},
     journal={Invent. Math.},
      volume={217},
      number={3},
       pages={833\ndash 886},
         url={https://doi.org/10.1007/s00222-019-00878-1},
      review={\MR{3989255}},
}

\bib{JL84}{incollection}{
      author={Johnson, William~B.},
      author={Lindenstrauss, Joram},
       title={Extensions of {L}ipschitz mappings into a {H}ilbert space},
        date={1984},
   booktitle={Conference in modern analysis and probability ({N}ew {H}aven,
  {C}onn., 1982)},
      series={Contemp. Math.},
      volume={26},
   publisher={Amer. Math. Soc., Providence, RI},
       pages={189\ndash 206},
         url={https://doi.org/10.1090/conm/026/737400},
      review={\MR{737400}},
}

\bib{JS09}{article}{
      author={Johnson, W.~B.},
      author={Schechtman, G.},
       title={Diamond graphs and super-reflexivity},
        date={2009},
        ISSN={1793-5253},
     journal={J. Topol. Anal.},
      volume={1},
      number={2},
       pages={177\ndash 189},
         url={http://dx.doi.org/10.1142/S1793525309000114},
      review={\MR{2541760 (2010k:52031)}},
}

\bib{Kloeckner14}{article}{
      author={Kloeckner, Beno\^{\i}t~R.},
       title={Yet another short proof of {B}ourgain's distortion estimate for
  embedding of trees into uniformly convex {B}anach spaces},
        date={2014},
        ISSN={0021-2172},
     journal={Israel J. Math.},
      volume={200},
      number={1},
       pages={419\ndash 422},
         url={https://doi.org/10.1007/s11856-014-0024-4},
      review={\MR{3219585}},
}

\bib{KaltonLova08}{article}{
      author={Kalton, Nigel~J.},
      author={Randrianarivony, N.~Lovasoa},
       title={The coarse {L}ipschitz geometry of {$l_p\oplus l_q$}},
        date={2008},
        ISSN={0025-5831},
     journal={Math. Ann.},
      volume={341},
      number={1},
       pages={223\ndash 237},
         url={https://doi.org/10.1007/s00208-007-0190-3},
      review={\MR{2377476}},
}

\bib{Laakso02}{article}{
      author={Laakso, Tomi~J.},
       title={Plane with {$A_\infty$}-weighted metric not bi-{L}ipschitz
  embeddable to {${\Bbb R}^N$}},
        date={2002},
        ISSN={0024-6093},
     journal={Bull. London Math. Soc.},
      volume={34},
      number={6},
       pages={667\ndash 676},
         url={https://doi.org/10.1112/S0024609302001200},
      review={\MR{1924353}},
}

\bib{LN04}{article}{
      author={Lee, J.~R.},
      author={Naor, A.},
       title={Embedding the diamond graph in {$L_p$} and dimension reduction in
  {$L_1$}},
        date={2004},
        ISSN={1016-443X},
     journal={Geom. Funct. Anal.},
      volume={14},
      number={4},
       pages={745\ndash 747},
         url={https://doi.org/10.1007/s00039-004-0473-8},
      review={\MR{2084978}},
}

\bib{LafforgueNaor14_GD}{article}{
      author={Lafforgue, Vincent},
      author={Naor, Assaf},
       title={A doubling subset of {$L_p$} for {$p>2$} that is inherently
  infinite dimensional},
        date={2014},
        ISSN={0046-5755},
     journal={Geom. Dedicata},
      volume={172},
       pages={387\ndash 398},
         url={https://doi.org/10.1007/s10711-013-9924-4},
      review={\MR{3253787}},
}

\bib{LafforgueNaor14_IJM}{article}{
      author={Lafforgue, Vincent},
      author={Naor, Assaf},
       title={{Vertical versus horizontal Poincar\textbackslash'e inequalities
  on the Heisenberg group}},
        date={2014},
        ISSN={0021-2172},
     journal={Israel Journal of Mathematics},
      volume={203},
      number={1},
       pages={309 339},
}

\bib{LarsenNelson17}{incollection}{
      author={Larsen, Kasper~Green},
      author={Nelson, Jelani},
       title={Optimality of the {J}ohnson-{L}indenstrauss lemma},
        date={2017},
   booktitle={58th {A}nnual {IEEE} {S}ymposium on {F}oundations of {C}omputer
  {S}cience---{FOCS} 2017},
   publisher={IEEE Computer Soc., Los Alamitos, CA},
       pages={633\ndash 638},
      review={\MR{3734267}},
}

\bib{LangPlaut01}{article}{
      author={Lang, Urs},
      author={Plaut, Conrad},
       title={Bilipschitz embeddings of metric spaces into space forms},
        date={2001},
        ISSN={0046-5755},
     journal={Geom. Dedicata},
      volume={87},
      number={1-3},
       pages={285\ndash 307},
         url={https://doi.org/10.1023/A:1012093209450},
      review={\MR{1866853}},
}

\bib{LeeRag10}{article}{
      author={Lee, James~R.},
      author={Raghavendra, Prasad},
       title={Coarse differentiation and multi-flows in planar graphs},
        date={2010},
        ISSN={0179-5376},
     journal={Discrete Comput. Geom.},
      volume={43},
      number={2},
       pages={346\ndash 362},
         url={https://doi.org/10.1007/s00454-009-9172-4},
      review={\MR{2579701}},
}

\bib{Naor_ICM18}{inproceedings}{
      author={Naor, Assaf},
       title={Metric dimension reduction: a snapshot of the {R}ibe program},
        date={2018},
   booktitle={Proceedings of the {I}nternational {C}ongress of
  {M}athematicians---{R}io de {J}aneiro 2018. {V}ol. {I}. {P}lenary lectures},
   publisher={World Sci. Publ., Hackensack, NJ},
       pages={759\ndash 837},
      review={\MR{3966745}},
}

\bib{NaorYoung18}{article}{
      author={Naor, Assaf},
      author={Young, Robert},
       title={Vertical perimeter versus horizontal perimeter},
        date={2018},
        ISSN={0003-486X},
     journal={Ann. of Math. (2)},
      volume={188},
      number={1},
       pages={171\ndash 279},
         url={https://doi.org/10.4007/annals.2018.188.1.4},
      review={\MR{3815462}},
}

\bib{NaorYoung20}{article}{
      author={{Naor}, Assaf},
      author={{Young}, Robert},
       title={{Foliated corona decompositions}},
        date={2020-04},
     journal={arXiv e-prints},
      eprint={arXiv:2004.12522},
}

\bib{Pisier75}{article}{
      author={Pisier, Gilles},
       title={Martingales with values in uniformly convex spaces},
        date={1975},
        ISSN={0021-2172},
     journal={Israel J. Math.},
      volume={20},
      number={3-4},
       pages={326\ndash 350},
         url={https://doi.org/10.1007/BF02760337},
      review={\MR{394135}},
}

\bib{Sato09}{article}{
      author={Sato, Takashi},
       title={An alternative proof of {B}erg and {N}ikolaev's characterization
  of {$\rm CAT(0)$}-spaces via quadrilateral inequality},
        date={2009},
        ISSN={0003-889X},
     journal={Arch. Math. (Basel)},
      volume={93},
      number={5},
       pages={487\ndash 490},
         url={https://doi.org/10.1007/s00013-009-0057-9},
      review={\MR{2563595}},
}

\bib{Swift18}{article}{
      author={Swift, A.},
       title={A coding of bundle graphs and their embeddings into {B}anach
  spaces},
        date={2018},
        ISSN={0025-5793},
     journal={Mathematika},
      volume={64},
      number={3},
       pages={847\ndash 874},
         url={https://doi.org/10.1112/s002557931800027x},
      review={\MR{3867323}},
}

\bib{Zhang}{article}{
      author={Zhang, Sheng},
       title={A submetric characterization of {R}olewicz's property ($\beta$)},
            journal={arXiv e-prints},
        date={2021-01},
      eprint={arXiv:2101.08707},
}

\end{biblist}
\end{bibdiv}

\end{document}